\definecolor{gr}{rgb}   {0.,   0.69,   0.23 }
\definecolor{bl}{rgb}   {0.,   0.5,   1. }
\definecolor{mg}{rgb}   {0.85,  0.,    0.85}
\definecolor{yl}{rgb}   {0.8,  0.7,   0.}
\definecolor{or}{rgb}  {0.7,0.2,0.2}
\tikzset{
	ddot/.style={circle,fill=white,draw=black,inner sep=0pt,minimum size=0.8mm},
	>=stealth,
	}
\tikzset{
	ddot2/.style={circle,fill=black,draw=black,inner sep=0pt,minimum size=0.8mm},
	>=stealth,
	}
\newtheorem{theorem}{Theorem} [section]
\newtheorem{lemma}[theorem]{Lemma}
\newtheorem{proposition}[theorem]{Proposition}
\newtheorem{remark}[theorem]{Remark}
\newtheorem*{acknowledgment}{Acknowledgments}
\DeclareMathOperator*{\supp}{supp}
\newcommand{\I}{\mathcal{I}}
\newcommand{\noi}{\noindent}
\newcommand{\Z}{\mathbb{Z}}
\newcommand{\R}{\mathbb{R}}
\newcommand{\T}{\mathbb{T}}
\newcommand{\D}{\mathcal{D}}
\let\Re=\undefined\DeclareMathOperator*{\Re}{Re}
\let\Im=\undefined\DeclareMathOperator*{\Im}{Im}
\let\P= \undefined
\newcommand{\P}{\mathbf{P}}
\newcommand{\Q}{\mathbf{Q}}
\newcommand{\E}{\mathbb{E}}
\renewcommand{\L}{\mathcal{L}}
\newcommand{\al}{\alpha}
\newcommand{\be}{\beta}
\newcommand{\dl}{\delta}
\newcommand{\nb}{\nabla}
\newcommand{\deff}{\stackrel{\textup{def}}{=}}
\newcommand{\Dl}{\Delta}
\newcommand{\eps}{\varepsilon}
\newcommand{\kk}{\kappa}
\newcommand{\g}{\gamma}
\newcommand{\G}{\Gamma}
\newcommand{\s}{\sigma}
\newcommand{\ft}{\widehat}
\newcommand{\cj}{\overline}
\newcommand{\dt}{\partial_t}
\renewcommand{\o}{\omega}
\renewcommand{\O}{\Omega}
\newcommand{\les}{\lesssim}
\newcommand{\ges}{\gtrsim}
\newcommand{\Id}{\mathrm{Id}}
\newcommand{\jb}[1]
{\langle #1 \rangle}
\newcommand{\ind}{\mathbf 1}
\newcommand{\N}{\mathbb{N}}
\renewcommand{\H}{\mathcal{H}}
\newcommand{\U}{\Theta}
\renewcommand{\Q}{\mathbb{Q}}
\newcommand{\PP}{\mathbb{P}}
\newcommand{\ZZ}{\mathcal{Z}}
\newcommand{\muu}{\vec{\mu}}
\newcommand{\rhoo}{\vec{\rho}}
\newcommand{\W}{\mathcal{W}}
\newcommand{\dr}{\eta}
\newcommand{\Dr}{H}
\newcommand{\Ha}{\mathbb{H}_a}
\newcommand{\Hc}{\mathbb{H}_c}
\numberwithin{equation}{section}
\numberwithin{theorem}{section}
\begin{document}
\baselineskip = 15pt

\title[Invariant Gibbs measure for the sine-Gordon model]
{Invariant Gibbs dynamics for the dynamical sine-Gordon model}

\author[T.~Oh, T.~Robert, P.~Sosoe, and Y.~Wang]
{Tadahiro Oh, Tristan Robert, Philippe Sosoe, and Yuzhao Wang}

\address{
Tadahiro Oh, School of Mathematics\\
The University of Edinburgh\\
and The Maxwell Institute for the Mathematical Sciences\\
James Clerk Maxwell Building\\
The King's Buildings\\
Peter Guthrie Tait Road\\
Edinburgh\\ 
EH9 3FD\\
 United Kingdom}

\email{hiro.oh@ed.ac.uk}

\address{
Tristan Robert\\
Fakult\"at f\"ur Mathematik\\
Universit\"at Bielefeld\\
Postfach 10 01 31\\
33501 Bielefeld\\
Germany}

\email{trobert@math.uni-bielefeld.de}

\address{
Philippe Sosoe\\
 Department of Mathematics\\ Cornell University, 584 Malott Hall, Ithaca\\
New York 14853, USA}
\email{psosoe@math.cornell.edu}

\address{
Yuzhao Wang\\
School of Mathematics, 
University of Birmingham, 
Watson Building, 
Edgbaston, 
Birmingham\\
B15 2TT, 
United Kingdom}

\email{y.wang.14@bham.ac.uk}

\subjclass[2010]{35L71, 60H15}

\keywords{stochastic sine-Gordon equation; 
dynamical sine-Gordon model;
renormalization; 
white noise; Gibbs measure; Gaussian multiplicative chaos}

\begin{abstract}
In this note, 
we  study the hyperbolic stochastic damped sine-Gordon equation (SdSG),
with a parameter $\be^2 > 0$,  
 and its associated Gibbs dynamics
on the two-dimensional torus.
After introducing a suitable renormalization, 
we first  construct  the Gibbs measure in the range $0<\be^2<4\pi$
via the variational approach due to Barashkov-Gubinelli (2018).
We then prove 
 almost sure global well-posedness and invariance of the Gibbs measure under 
 the hyperbolic SdSG dynamics in the range $0<\be^2<2\pi$.
Our construction of the Gibbs measure also yields
almost sure global well-posedness and invariance of the Gibbs measure
for the parabolic sine-Gordon model 
in the range $0<\be^2<4\pi$.
\end{abstract}


\maketitle
%


\baselineskip = 13.5pt

\vspace*{-4mm}

\section{Introduction}
\subsection{Dynamical sine-Gordon model}

We consider
the following stochastic damped sine-Gordon equation (SdSG) on 
$\T^2 = (\R/2\pi\Z)^2$
with an additive space-time white noise forcing:
\begin{align}
\begin{cases}
\dt^2 u + \dt u + (1- \Dl)  u   +  \g \sin(\be u) = \sqrt{2}\xi\\
(u, \dt u) |_{t = 0} = (u_0, v_0) , 
\end{cases}
\qquad (t, x) \in \R_+\times\T^2,
\label{SdSG}
\end{align}

\noi
where 
$\g$ and $\be$ are non-zero real numbers
and 
$\xi$ denotes a (Gaussian) space-time white noise on $\R_+\times\T^2$.
Our main goal in this paper is 
to construct 
invariant dynamics of SdSG~\eqref{SdSG}
associated with the 
 Gibbs measure, which formally reads
\begin{align}\label{Gibbs1}
``d\rhoo(u,v) = \ZZ^{-1}e^{-E(u,v)}dudv".
\end{align}

\noi
Here,  $\ZZ = \ZZ(\be)$
denotes a normalization constant
 and 
 \begin{align}
E(u,v)= \frac12\int_{\T^2}\big(u(x)^2+|\nabla u(x)|^2 + v(x)^2 \big)dx -\frac\g\be \int_{\T^2}\cos\big(\be u(x)\big)dx
\label{Hamil1}
\end{align}

\noi
 denotes the energy (= Hamiltonian) of the (deterministic undamped) sine-Gordon equation:
\begin{align}
\dt^2 u +  (1- \Dl)  u   +  \g \sin(\be u) = 0.
\label{SdSG2}
\end{align}

\noi
Our first goal is to provide a rigorous construction of 
 the Gibbs measure $\rhoo$ for $0 < \be^2 < 4\pi$;
 see Theorem \ref{THM:Gibbs}.

The Gibbs measure $\rhoo$ in \eqref{Gibbs1} 
arises in various  physical contexts such as two-dimensional Yukawa and Coulomb gases in statistical mechanics and  the quantum sine-Gordon model in Euclidean quantum field theory. 
We refer the readers to \cite{PS, BEMS, Fro,McKean81,McKean94,LRV, LRV2,HS,CHS} and the references therein for more physical motivations and interpretations of the measure~$\rhoo$. 
The dynamical model \eqref{SdSG} then corresponds to the 
so-called ``canonical" stochastic quantization~\cite{RSS} of the quantum sine-Gordon model represented by the measure $\rhoo$ in \eqref{Gibbs1}.

From the analytical point of view, 
the hyperbolic SdSG  \eqref{SdSG} is  a good  model 
for the  study of singular stochastic nonlinear wave equations (SNLW). 
SNLW has been studied in various settings; 
see for example \cite[Chapter 13]{DPZ14} and the references therein. 
In particular,  over the past several years, 
we have witnessed a fast development
in the Cauchy theory of
singular SNLW on $\T^d$.
When $d = 2$, 
the well-posedness theory for 
SNLW with a 
 polynomial nonlinearity:
\begin{align}\label{SNLW}
\dt^2u+(1-\Dl)u+u^k=\xi
\end{align}

\noi
 is now well understood \cite{GKO,GKOT, OOR, OO}. See also \cite{ORTz, To}
 for related results on two-dimensional compact Riemannian manifolds
 \cite{ORTz}
 and on $\R^2$ \cite{To}.
  The situation is more delicate for $d = 3$.
In a recent paper, \cite{GKO2},  
Gubinelli, Koch, and the first author treated the quadratic case ($k = 2$)
by adapting the paracontrolled calculus, originally introduced in the parabolic setting \cite{GIP},
 to the dispersive setting.
For the sine-Gordon model, the main new difficulty in  \eqref{SdSG} 
comes from the non-polynomial nature of the nonlinearity,
which makes the analysis of the relevant stochastic object particularly non-trivial.

In  the aforementioned works,  the main source of difficulty 
 comes from the  roughness of the space-time white noise $\xi$.
For $d \geq 2$,  
 the stochastic convolution $\Psi$, 
 solving the following 
  linear stochastic wave equation:\footnote{The equation \eqref{SLW}  is also referred to as the linear stochastic  Klein-Gordon equation.  In the following, however, we  simply refer to this as to 
  the  wave equation.}
\begin{align}\label{SLW}
\dt^2\Psi + (1-\Dl)\Psi =\xi, 
\end{align}

\noi
belongs 
almost surely to $C(\R_+;W^{-\eps,\infty}(\T^2))$ for any $\eps>0$
but not for $\eps = 0$.
See Lemma~\ref{LEM:psi}.
This lack of regularity shows that 
there is an issue in forming a
nonlinearity of the form  $\Psi^k$
and $\sin (\be \Psi)$, 
thus requiring a proper renormalization.
In our previous work~\cite{ORSW}, 
we studied the undamped case:
\begin{align}\label{SSG}
\dt^2 u + (1-\Dl)u + \g\sin(\be u)=\xi.
\end{align} 

\noi
 By introducing a {\it time-dependent} renormalization, 
we proved local well-posedness of 
the undamped model 
\eqref{SSG} for {\it any} value of $\be^2>0$ for small times (depending on $\be$).
The main ingredient in \cite{ORSW} was
 to exploit  smallness of $\Psi$ in \eqref{SLW} for small times
(thanks to the time-dependent nature of the renormalization).

The situation for the damped model \eqref{SdSG} is, however, 
different from the undamped case.
In studying the problem associated with 
the  (formally) invariant measure $\rhoo$ in \eqref{Gibbs1}, 
we work with a {\it time-independent} renormalization
and thus the situation is closer to the parabolic model:\footnote{We point out that 
while
the spatially homogeneous case with $\dt -\frac12 \Dl$ was studied in 
 \cite{HS, CHS}, 
the model~\eqref{PSG} is more relevant for our discussion.
See Remark \ref{REM:PSG}.}
\begin{align}\label{PSG}
\dt u +\tfrac12(1-\Dl) u + \g\sin(\be u)=\xi,
\end{align}

\noi
studied in \cite{HS, CHS}, 
where the value of $\be^2 > 0$ played an important role in the solution theory.
 For $0<\be^2<4\pi$,  the Da Prato-Debussche trick \cite{DPD}
along with a standard Wick renormalization yields
local well-posedness of \eqref{PSG}; see Remark~\ref{REM:PSG}.
It turns out that there is an infinite number of thresholds:
$\be^2 = \frac{j}{j+1}8\pi$, $j\in\N$, 
where one encounters new divergent stochastic objects, requiring further renormalizations.
By using the theory of regularity structures~\cite{H}, 
Hairer-Shen \cite{HS} and Chandra-Hairer-Shen~\cite{CHS}
proved  local well-posedness of  the parabolic model \eqref{PSG}  to the entire subcritical regime 
$0 < \be^2<8\pi$. 
When $\be^2 = 8 \pi$, 
the equation~\eqref{PSG}  is critical
and falls outside the scope of the current theory.

Due to a weaker smoothing property of
the relevant linear propagator, 
our hyperbolic model \eqref{SdSG}
is expected to be much more involved than the parabolic case.
Indeed, as we see below, 
the standard Da Prato-Debussche trick
 yields the solution theory for \eqref{SdSG}
 only for $0 < \be^2 < 2\pi$, 
(which is much smaller than the parabolic case: $0 < \be^2 < 4\pi$).
See Theorem \ref{THM:GWP} below.
 
In the next subsection, we provide precise statements of our main results.
Before proceeding further, 
we mention 
the recent works \cite{Ga,HKK,ORW} 
on the well-posedness theory for the stochastic heat and wave equations with an exponential nonlinearity
in the two-dimensional setting.

\subsection{Main results}
Our main goal in this paper is two-folds;
(i) provide a rigorous construction of (a renormalized version of) the Gibbs measure $\rhoo$ in \eqref{Gibbs1}
and (ii) construct well-defined dynamics for the hyperbolic SdSG \eqref{SdSG}
associated with the Gibbs initial data.
For this purpose,  we first fix some notations. 
Given $ s \in \R$, 
let $\mu_s$ denote
a Gaussian measure,   formally defined by
\begin{align}
 d \mu_s 
   = Z_s^{-1} e^{-\frac 12 \| u\|_{{H}^{s}}^2} du
& =  Z_s^{-1} \prod_{n \in \Z^2} 
 e^{-\frac 12 \jb{n}^{2s} |\ft u_n|^2}   
 d\ft u_n , 
\label{gauss0}
\end{align}

\noi
where 
  $\jb{\,\cdot\,} = \big(1+|\,\cdot\,|^2\big)^\frac{1}{2}$
and $\ft u_n$  denotes the Fourier transforms of $u$.
We set 
\begin{align}
\muu_s = \mu_s \otimes \mu_{s-1} .
\label{gauss1}
\end{align}

\noi
In particular, when $s = 1$, 
 the measure $\muu_1$ is defined as 
   the induced probability measure
under the map:
\begin{equation*}
\o \in \O \longmapsto (u^\o, v^\o),
 \end{equation*}

\noi
where $u^\o$ and $v^\o$ are given by
\begin{equation}\label{series}
u^\o = \sum_{n \in \Z^2} \frac{g_n(\o)}{\jb{n}}e_n
\qquad\text{and}\qquad
v^\o = \sum_{n \in \Z^2} h_n(\o)e_n.
\end{equation}

\noi
Here, 
 $e_n=(2\pi)^{-1}e^{i n\cdot x}$
 and $\{g_n,h_n\}_{n\in\Z^2}$ denotes  a family of independent standard 
 complex-valued  Gaussian random variables conditioned so that $\cj{g_n}=g_{-n}$ and $\cj{h_n}=h_{-n}$, 
 $n \in \Z^2$.
It is easy to see that $\muu_1 = \mu_1\otimes\mu_0$ is supported on
\begin{align*}
\H^{s}(\T^2)\deff H^{s}(\T^2)\times H^{s - 1}(\T^2)
\end{align*}

\noi
for $s < 0$ but not for $s \geq 0$.

With  
 \eqref{Hamil1}, 
 \eqref{gauss0}, and 
 \eqref{gauss1},  
 we can formally write $\rhoo$ in \eqref{Gibbs1} as 
\begin{align}\label{Gibbs2}
d\rhoo(u,v) \sim e^{\frac{\g}{\be}\int_{\T^2}\cos(\be u)dx}
d\muu_1 (u, v).
\end{align} 

\noi
In view of the roughness of the support of $\muu_1$, 
the nonlinear term in \eqref{Gibbs2} is not well-defined
and thus a proper renormalization is required to give a meaning to \eqref{Gibbs2}.

Let  $\P_N$  be a smooth frequency projector
onto the frequencies  $\{n\in\Z^2:|n|\leq N\}$
defined as a Fourier multiplier operator with a symbol:
\begin{align}
\chi_N(n) = \chi(N^{-1}n)
\label{chi}
\end{align}

\noi
for some  fixed non-negative function 
 $\chi \in C^\infty_0(\R^2)$ 
such that $\supp \chi \subset \{\xi\in\R^2:|\xi|\leq 1\}$ and $\chi\equiv 1$ 
on $\{\xi\in\R^2:|\xi|\leq \tfrac12\}$.  
Given  $u = u^\o$ as in \eqref{series}, 
i.e.\footnote{Given a random variable $X$, $\L(X)$ denotes the law of $X$.}~$\L(u) = \mu_1$, 
set $\s_N$, $N \in \N$, by setting
 \begin{align}\label{sN}
 \s_N =  \E\Big[\big(\P_Nu(x)\big)^2\Big] =\frac1{4\pi^2}\sum_{n\in\Z^2}\frac{\chi_N(n)^2}{\jb{n}^2}
 = \frac1{2\pi}\log N + o(1),
 \end{align}

\noi
as $N \to \infty$, 
independent of $x\in\T^2$.
Given $N \in \N$, 
define 
 the truncated renormalized density:
\begin{align}\label{RN}
R_N(u) = \frac{\g_N}{\be}\int_{\T^2}\cos\big(\be\P_Nu(x)\big)dx,
\end{align}

\noi
where $\g_N = \g_N(\be)$ is defined by 
 \begin{equation}\label{CN}
 \g_N(\be) = e^{\frac{\be^2}{2}\s_N}. 
 \end{equation}

\noi
In particular, we have  $\g_N \to \infty$ as $N \to \infty$.
We then  define the truncated renormalized Gibbs measure:
\begin{align}\label{GibbsN}
d\rhoo_N(u,v)= \ZZ_N^{-1}e^{R_N(u)}d\muu_1(u,v)
\end{align}

\noi
for some normalization constant $\ZZ_N = \ZZ_N(\be) \in (0,\infty)$.
We now state our first result.

\begin{theorem}\label{THM:Gibbs}
Let $0<\be^2<4\pi$. 

\noi
\textup{(i)} The truncated renormalized density $\{R_N\}_{N\in\N}$ in \eqref{RN} 
is a Cauchy sequence in $L^p(\mu_1)$ for any finite $p\ge 1$, thus converging to some limiting random variable $R\in L^p(\mu_1)$.

\smallskip
\noi
\textup{(ii)} 
Given any finite $ p \ge 1$, 
there exists $C_p > 0$ such that 
\begin{equation}
\sup_{N\in \N} \Big\| e^{R_N(u)}\Big\|_{L^p(\mu_1)}
\leq C_p  < \infty.
\label{exp1}
\end{equation}

\noi
Moreover, we have
\begin{equation}\label{exp2}
\lim_{N\rightarrow\infty}e^{ R_N(u)}=e^{R(u)}
\qquad \text{in } L^p(\mu_1).
\end{equation}

\noi
As a consequence, 
the truncated renormalized Gibbs measure $\rhoo_N$ in \eqref{GibbsN} converges, in the sense of \eqref{exp2}, 
 to the renormalized Gibbs measure $\rhoo$ given by
\begin{align}\label{Gibbs3}
d\rhoo(u,v)= \ZZ^{-1} e^{R(u)}d\muu_1(u, v).
\end{align}

\noi
Furthermore, 
the resulting Gibbs measure $\rhoo$ is equivalent 
to the Gaussian measure $\muu_1$.

\end{theorem}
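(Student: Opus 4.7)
The proof splits into three pieces: part (i) is a direct Gaussian moment computation, part (ii) is the heart of the matter and would use the Barashkov-Gubinelli variational approach, and the remaining statements follow by soft arguments.

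For part (i), I would expand $\cos(\be\P_N u)=\tfrac12(e^{i\be\P_N u}+e^{-i\be\P_N u})$ so that $\g_N e^{\pm i\be \P_N u(x)}$ is, up to constants, the truncated imaginary Gaussian multiplicative chaos. The basic Gaussian identity
\[
\E\big[e^{i\be(\P_N u(x)-\P_M u(y))}\big]=\exp\!\Big(-\tfrac{\be^2}{2}(\s_N+\s_M)+\be^2 G_{N,M}(x-y)\Big),
\]
with $G_{N,M}(x-y)=\E[\P_N u(x)\P_M u(y)]\to G(x-y)\sim -\frac1{2\pi}\log|x-y|$ as $N,M\to\infty$, shows after the $\g_N\g_M$ cancellation that $\E[(R_N-R_M)^2]$ reduces to an integral of $e^{\pm\be^2 G_{N,M}(x-y)}$-type terms. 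Since $e^{\be^2 G(x-y)}\sim |x-y|^{-\be^2/(2\pi)}$ is integrable on $\T^2\times\T^2$ precisely when $\be^2<4\pi$, dominated convergence yields $L^2$-convergence. Higher moments $\E[(R_N-R_M)^{2p}]$ are handled analogously by Wick's theorem, the resulting sum over pairings being controlled by the same pairwise logarithmic singularities; this gives $L^p$-convergence for every finite $p\ge 1$.

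Part (ii) is the main obstacle, and I would attack it via the Bou\'e-Dupuis variational formula. Writing $u=Y_\infty$ as the terminal value of a natural Brownian-type process $Y_t$ with $\L(Y_\infty)=\mu_1$,
\[
-\log\E\big[e^{pR_N(u)}\big]=\inf_\theta\,\E\Big[-pR_N(Y_\infty+I(\theta))+\tfrac12\int_0^\infty\|\theta_t\|_{L^2}^2\,dt\Big],
\]
where the drift satisfies $\|I(\theta)\|_{H^1}^2\le\int_0^\infty\|\theta_t\|_{L^2}^2\,dt$. Using $\cos(a+b)=\cos a\cos b-\sin a\sin b$, rewrite $\be R_N(Y_\infty+I(\theta))=\langle\U_c^N,\cos(\be\P_N I(\theta))\rangle-\langle\U_s^N,\sin(\be\P_N I(\theta))\rangle$ with $\U_c^N:=\g_N\cos(\be\P_N Y_\infty)$ and $\U_s^N:=\g_N\sin(\be\P_N Y_\infty)$. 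Exactly as in part (i), for any $s>\be^2/(4\pi)$ and any finite $q$ the imaginary chaos $\U_c^N,\U_s^N$ are uniformly bounded in $L^q(\mu_1;H^{-s}(\T^2))$. Since $\be^2<4\pi$, one can choose $s\in(\be^2/(4\pi),1)$; the chain rule then gives $\|\cos(\be\P_N I(\theta))\|_{H^s}+\|\sin(\be\P_N I(\theta))\|_{H^s}\lesssim 1+|\be|\|I(\theta)\|_{H^1}$. Pairing via $H^{-s}\times H^s$ duality and Young's inequality with a weight $\delta\sim(p\be^2)^{-1}$ yields
\[
\big|pR_N(Y_\infty+I(\theta))\big|\le C_p\big(1+\|\U_c^N\|_{H^{-s}}^2+\|\U_s^N\|_{H^{-s}}^2\big)+\tfrac14\int_0^\infty\|\theta_t\|_{L^2}^2\,dt,
\]
so that taking expectations produces a uniform lower bound for the variational infimum, proving \eqref{exp1}.

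The remaining statements follow by soft arguments. From part (i) one has $R_N\to R$ in probability, which combined with the uniform $L^q(\mu_1)$-boundedness of $e^{R_N}$ for all finite $q$ yields $e^{R_N}\to e^R$ in $L^p(\mu_1)$ via Vitali's convergence theorem, establishing \eqref{exp2}; in particular $\ZZ_N\to\ZZ=\E[e^R]\in(0,\infty)$, so $\rhoo_N\to\rhoo$ (for instance in total variation). Finally, since $R$ is $\mu_1$-a.s.\ finite, the Radon-Nikodym density $\ZZ^{-1}e^R$ of $\rhoo$ with respect to $\muu_1$ is strictly positive $\muu_1$-a.s., yielding mutual absolute continuity. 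The decisive ingredient throughout is the hypothesis $\be^2<4\pi$: it simultaneously makes the second-moment computation of part (i) finite and ensures $\be^2/(4\pi)<1$, so that in part (ii) the imaginary chaos lives in $H^{-s}$ with $s<1$ and the duality pairing with the $H^s$-regular smooth factors $\cos(\be\P_N I(\theta)),\sin(\be\P_N I(\theta))$ closes.
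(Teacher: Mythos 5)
Your overall strategy mirrors the paper's: Cauchy-ness of $R_N$ via moment estimates for the imaginary Gaussian multiplicative chaos (part (i)), the Bou\'e--Dupuis\,/\,Barashkov--Gubinelli variational formula combined with the cosine addition formula, $H^{-s}\times H^{s}$ duality, the fractional chain rule bound $\|e^{\pm i\be \I(\dr)(1)}\|_{H^s}\lesssim 1+\|\I(\dr)(1)\|_{H^1}$, and Young's inequality to absorb $\|\I(\dr)(1)\|_{H^1}^2$ into the drift cost (part (ii)), and finally uniform integrability plus convergence in probability (Vitali) for the remaining statements. Part (ii) and the concluding arguments reproduce the paper's proof essentially step-for-step, including the fact that $Y^\dr(1)$ has law $\mu_1$ under $\Q^\dr$ independently of the drift $\dr$.

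There is, however, a genuine gap in the higher-moment step of part (i). You assert that $\E[(R_N-R_M)^{2p}]$ is ``handled analogously by Wick's theorem, the resulting sum over pairings being controlled by the same pairwise logarithmic singularities.'' Expanding $\cos$ into complex exponentials and using the Gaussian formula for $\E[e^{i\be\sum_j\kk_j\P_{N_j} u(x_j)}]$, the relevant correlation after the $\g_N$-cancellation reads
\begin{align*}
\prod_{1\le j<k\le 2p} e^{-\be^2 \kk_j\kk_k\, G_{N_j,N_k}(x_j-x_k)}, \qquad \kk_j\in\{\pm1\}.
\end{align*}
In a balanced configuration ($p$ signs of each kind) there are $p^2$ singular factors behaving like $|x_j-x_k|^{-\be^2/(2\pi)}$ and $p(p-1)$ attenuating like-sign factors $|x_j-x_k|^{+\be^2/(2\pi)}$. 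Discarding the attenuating factors --- which is what ``controlled by the same pairwise logarithmic singularities'' amounts to --- and counting dimensions, a simultaneous collision of all $2p$ points at scale $r$ contributes $r^{(4p-2)-\be^2 p^2/(2\pi)}$, which is \emph{non-integrable} once $p\gtrsim 8\pi/\be^2$, for every fixed $\be^2\in(0,4\pi)$. So the naive pairwise bound that gives $L^2$-convergence does not extend to $L^p$. The estimate for all $p$ must exploit the cancellation between like- and unlike-sign factors (an Onsager-type electrostatic inequality on the integrand); this is precisely the ``key cancellation property of charges'' the paper flags as essential and imports from Lemma~2.5 of \cite{ORSW}. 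Your $L^2$ computation is correct and self-contained, but the jump to all finite $p$ requires this additional structural input, and without it the claimed $L^p$-Cauchy estimate does not follow.
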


The proof of Theorem \ref{THM:Gibbs} also allows us to define the renormalized Gibbs measure:
\begin{align}
d\rho(u)= \ZZ^{-1} e^{R(u)}d\mu_1(u)
\label{Gibbs4}
\end{align}

\noi
as a limit of the truncated measure 
\begin{align*}
d\rho_N(u) =  \ZZ_N^{-1}e^{R_N(u)}d\mu_1(u)
\end{align*}

\noi
for $0 < \be^2 < 4\pi$.
The Gibbs measure $\rho$ in \eqref{Gibbs4}
is relevant to the parabolic model \eqref{PSG}.
See Remark~\ref{REM:PSG}.

In a recent work \cite{LRV2}, Lacoin, Rhodes, and Vargas 
constructed a measure associated with the sine-Gordon model
in the one-dimensional setting, where the based Gaussian measure is log-correlated 
(as in the massive Gaussian free field on $\T^2$).
Their construction applies to the 
full subcritical range:\footnote{Due to a different scaling, the threshold $0<\be^2<2d$ in \cite{LRV2} corresponds to $0<\be^2<8\pi$ in our convention. See Remark 1.14 in \cite{ORW}.} 
 $0<\be^2<8\pi$.
At this moment, their argument is restricted to the one-dimensional case and 
does not extend to the two-dimensional case under consideration.

Theorem \ref{THM:Gibbs}\,(i) 
follows from  the construction of the imaginary Gaussian multiplicative chaos; see Lemma \ref{LEM:Ups} below. 
The main difficulty in proving Theorem \ref{THM:Gibbs}\,(ii)
appears in showing the uniform bound \eqref{exp1}.
We  establish the bound \eqref{exp1}
by applying the variational approach
 introduced by Barashkov and Gubinelli in \cite{BG}
  in the construction of  the $\Phi^4_3$-measure. See also \cite{GOTW}.

\medskip

Next, 
we move onto the well-posedness theory 
of the hyperbolic SdSG \eqref{SdSG}.
Let us first introduce  the following  renormalized truncated SdSG:
\begin{align}
\dt^2 u_N   + \dt u_N  +(1-\Dl)  u_N 
+\g_N\P_N\big\{\sin (\be \P_Nu_N)\big\}   = \sqrt{2} \xi , 
\label{RSdSGN}
\end{align} 

\noi
where 
 $\g_N$ is as in  \eqref{CN}. 
We now state our second result.

 \begin{theorem}\label{THM:GWP}
Let  $0<\be^2<2\pi$. Then,
the stochastic damped sine-Gordon equation~\eqref{SdSG} is almost surely globally well-posed with respect to the renormalized Gibbs measure~$\rhoo$ in~\eqref{Gibbs3}. Furthermore, the renormalized Gibbs measure $\rhoo$ is invariant under the dynamics.

More precisely, there exists a non-trivial stochastic process $(u,\dt u)\in C(\R_+;\H^{-\eps}(\T^2))$ for any $\eps>0$ such that, for any $T>0$, the solution $(u_N,\dt u_N)$ to 
the truncated SdSG~\eqref{RSdSGN} with 
the random initial data 
$(u_N, \dt u _N)|_{t = 0}$ distributed according 
to the truncated Gibbs measure $\rhoo_N$  in~\eqref{GibbsN}, converges in probability to $(u,\dt u)$ in $C([0,T];\H^{-\eps}(\T^2))$.
Moreover,  the law of $(u(t),\dt u(t))$ is given by the renormalized Gibbs measure $\rhoo$ in \eqref{Gibbs3}
 for any $t\ge 0$.
\end{theorem}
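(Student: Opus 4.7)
The plan combines a Da Prato--Debussche type decomposition with Bourgain's invariant measure argument. For the truncated equation~\eqref{RSdSGN} I decompose $u_N = z_N + \Psi + w_N$, where $z_N(t) = V(t)(u_N(0),\dt u_N(0))$ is the homogeneous evolution of the random initial data under the damped linear wave propagator $V$, $\Psi$ is the stochastic convolution solving $\dt^2 \Psi + \dt \Psi + (1-\Dl)\Psi = \sqrt{2}\xi$ with zero initial data, and $w_N$ is the smoother remainder with $(w_N(0),\dt w_N(0))=(0,0)$. Setting $\Phi_N = \P_N(z_N+\Psi)$, the trigonometric addition formula recasts the equation for $w_N$ as
\begin{align*}
\dt^2 w_N + \dt w_N + (1-\Dl)w_N = -\P_N\bigl\{\g_N\sin(\be\Phi_N)\cos(\be\P_N w_N) + \g_N\cos(\be\Phi_N)\sin(\be\P_N w_N)\bigr\}.
\end{align*}
The key stochastic inputs are the imaginary Gaussian multiplicative chaoses $\U_N^{\pm}(t,x)\deff\g_N e^{\pm i\be\Phi_N(t,x)}$. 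Because $\muu_1$ is invariant under the \emph{linear} stochastic damped wave equation, when $(u_N(0),\dt u_N(0))\sim\muu_1$ the process $z_N+\Psi$ is stationary with marginal $\mu_1$ at every time. A time-parametrized variant of Lemma~\ref{LEM:Ups}, combined with the Kolmogorov continuity criterion applied to second-moment estimates on $\U_N^{\pm}(t)-\U_N^{\pm}(s)$, yields convergence of $\U_N^{\pm}$ in $L^p(\O;C([0,T];W^{-\alpha,\infty}(\T^2)))$ for any $\alpha>\tfrac{\be^2}{4\pi}$ and any finite $p\ge 1$; this transfers to initial data $\sim\rhoo_N$ thanks to the uniform bound~\eqref{exp1} on the density $d\rhoo_N/d\muu_1$.

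Next, I establish uniform-in-$N$ local well-posedness for the $w_N$-equation by a contraction in $C([0,T];H^s(\T^2))\cap C^1([0,T];H^{s-1}(\T^2))$ for some exponent $s$ with $\tfrac{\be^2}{4\pi}<s<1-\tfrac{\be^2}{4\pi}$ --- this window is non-empty precisely because $\be^2<2\pi$. The damped linear propagator gains one derivative in the Duhamel integral. The nonlinearity is a product of the rough distribution $\U^{\pm}\in W^{-\alpha,\infty}$ with $\cos(\be\P_N w_N)$ and $\sin(\be\P_N w_N)$, which are bounded and of positive Sobolev regularity $s$ by a Moser-type estimate: since $\sin$ and $\cos$ are smooth with bounded derivatives, composition with $w_N\in H^s$ for $s\in(0,1)$ yields $H^s$ control with norm linear in $\|w_N\|_{H^s}$. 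The resolution $s>\alpha$ then places this product in $H^{-\alpha}$, and the Duhamel operator recovers $H^{1-\alpha}\hookrightarrow H^s$, so the contraction closes. The resulting existence time depends only on $\|\U^{\pm}_N\|_{C_TW^{-\alpha,\infty}}$, which is tight uniformly in $N$ by the previous step.

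For globalization and invariance I invoke Bourgain's scheme. The truncated equation~\eqref{RSdSGN} is globally well-posed for arbitrary initial data since its nonlinearity is bounded; the truncated flow splits into a finite-dimensional stochastic Hamiltonian system with $H^1$-energy plus renormalized potential $-\tfrac{\g_N}{\be}\int\cos(\be\P_N u)$ on the low modes and an infinite-dimensional linear Ornstein--Uhlenbeck system on the high modes. Fluctuation--dissipation on the low modes and invariance of $\muu_1$ under the high-mode OU dynamics together imply that $\rhoo_N$ in~\eqref{GibbsN} is invariant under the truncated flow. Combining invariance with the uniform local estimate and a standard Borel--Cantelli argument produces, for any $T,\eta>0$, an event of $\rhoo_N$-probability $\ge 1-\eta$ on which the truncated solution exists up to time $T$ uniformly in $N$. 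Passing to the limit $N\to\infty$ using Theorem~\ref{THM:Gibbs}\,(ii) and the convergence $\U_N^{\pm}\to\U^{\pm}$ produces a limiting process $(u,\dt u)\in C(\R_+;\H^{-\eps}(\T^2))$ whose marginal law at every $t\ge 0$ is $\rhoo$. The principal difficulty will be the uniform-in-$N$ local theory: one must upgrade the pointwise bounds $|\sin|,|\cos|\le 1$ to a Moser estimate for $\sin(\be w),\cos(\be w)$ in $H^s$ that is stable under $\P_N$ and then combine it with a careful paraproduct estimate for $W^{-\alpha,\infty}\cdot H^s\to H^{-\alpha}$; the threshold $\be^2=2\pi$ is sharp at this level, being exactly where the one-derivative smoothing of the damped wave propagator ceases to dominate the roughness $\tfrac{\be^2}{4\pi}$ of $\U^{\pm}$.
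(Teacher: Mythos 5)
Your proposal follows essentially the same route as the paper: your decomposition $u_N = z_N + \Psi + w_N$ coincides with the paper's $u_N = \Psi + w_N$ (since the stochastic convolution $\Psi$ in \eqref{SdLW}--\eqref{PsiN} already carries the random initial data), your $\U_N^{\pm}$ are exactly the imaginary Gaussian multiplicative chaos $\U_N$ of Lemma~\ref{LEM:Ups} and its conjugate, and your uniform-in-$N$ contraction in a Sobolev window $\al < s < 1-\al$ (non-empty precisely when $\al<\tfrac12$, i.e.\ $\be^2<2\pi$) is the content of Proposition~\ref{PROP:LWP} via Strichartz, the fractional chain and Leibniz rules, and the product estimate of Lemma~\ref{LEM:toolbox}, closed by Bourgain's invariant measure argument. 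The only cosmetic differences are the real addition-formula presentation of the nonlinearity in place of the complex-exponential form of \eqref{wN}, and your strengthening of Lemma~\ref{LEM:Ups} to $C_T W^{-\al,\infty}_x$ via Kolmogorov continuity, which is unnecessary since the $L^q_T W^{-\al,\infty}_x$ version already suffices for the $L^1_T H^{-\al}_x$ Duhamel bound used in the contraction.
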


In view of Theorem~\ref{THM:Gibbs} and Bourgain's invariant measure argument \cite{BO94,BO96},
Theorem \ref{THM:GWP}
follows once we construct the limiting process $(u,\dt u)$ locally in time. 
Furthermore, in view of the equivalence of 
 $\rhoo$ and $\muu_1$, 
 it suffices to study the dynamics with the Gaussian random initial data $(u_0, v_0)$ 
 with $\L(u_0, v_0) = \muu_1$.
 
 As in \cite{ORSW}, we proceed with the Da Prato-Debussche trick.
  For our damped model, 
we  let $\Psi$  be the solution to the linear stochastic damped wave equation:
\begin{align}\label{SdLW}
\begin{cases}
\dt^2 \Psi + \dt\Psi +(1-\Dl)\Psi  = \sqrt{2}\xi\\
(\Psi,\dt\Psi)|_{t=0}=(u_0,v_0),
\end{cases}
\end{align}

\noi
where $\L (u_0,v_0) = \muu_1$. 
Define the linear damped wave propagator $\D(t)$ by 
\begin{equation}\label{D}
\D(t) = e^{-\frac{t}2}\frac{\sin\Big(t\sqrt{\frac34-\Dl}\Big)}{\sqrt{\frac34-\Dl}}
\end{equation} 

\noi
as a Fourier multiplier operator.
Then, we have 
\begin{align} 
\Psi (t) 
 = \dt\D(t)u_0 + \D(t)(u_0+v_0)+ \sqrt{2}\int_0^t\D(t - t')dW(t'), 
\label{PsiN}
\end{align}

\noi
where  $W$ denotes a cylindrical Wiener process on $L^2(\T^2)$:
\begin{align}
W(t) =  \sum_{n \in \Z^2 } B_n (t) e_n,
\label{W1}
\end{align}

\noi
and  
$\{ B_n \}_{n \in \Z^2}$ 
is defined by 
$B_n(0) = 0$ and 
$B_n(t) = \jb{\xi, \ind_{[0, t]} \cdot e_n}_{ t, x}$.
Here, $\jb{\cdot, \cdot}_{t, x}$ denotes 
the duality pairing on $\R \times \T^2$.
As a result, 
we see that $\{ B_n \}_{n \in \Z^2}$ is a family of mutually independent complex-valued\footnote
{In particular, $B_0$ is  a standard real-valued Brownian motion.} 
Brownian motions conditioned so that $B_{-n} = \cj{B_n}$, $n \in \Z^2$. 
By convention, we normalized $B_n$ such that $\text{Var}(B_n(t)) = t$.

A direct computation shows that $\Psi_N(t,x)=\P_N\Psi(t,x)$
 is a mean-zero real-valued Gaussian random variable with variance
\begin{align*}
 \E \big[\Psi_N(t,x)^2\big] = \E\big[\big(\P_N\Psi(t, x)\big)^2]
 = \s_N
\end{align*}

\noi
for any $t\ge 0$, $x\in\T^2$ and $N \ge 1$,
where $\s_N$ is as in \eqref{sN}.

Let  $u_N$ be as in Theorem \ref{THM:GWP}, 
 satisfying \eqref{RSdSGN} with $\L\big((u_N, \dt u_N)|_{t=0}\big) = \muu_1$.
 Then, write $u_N$ 
 as 
 $u_N = w_N + \Psi$.
 Then, the residual part $w_N$ satisfies the following equation:
 \begin{align}\label{wN}
\begin{cases}
\dt^2 w_N + \dt w_N +(1-\Dl)w_N  + \Im\P_N\big\{e^{i\be \P_Nw_N}\U_N\big\}=0,\\
(w_N,\dt w_N)|_{t = 0}=(0,0).
\end{cases}
\end{align}

\noi
Here, $\U_N$ denotes the so-called 
 imaginary Gaussian multiplicative chaos defined by 
 \begin{align}
 \U_N(t,x)  = \,:\!e^{ i\be\Psi_N(t,x)}\!:\,\,  
 \deff   
\g_N e^{i\be\Psi_N(t,x)}
=  e^{\frac{\be^2}2 \s_N}e^{i\be\Psi_N(t,x)}, 
\label{Ups}
 \end{align} 
 
 \noi
 where $\g_N$ is as in \eqref{CN}.
By proceeding as in 
\cite{HS,ORSW}, 
 we  establish the regularity property of  $\U_N$; 
see Lemma~\ref{LEM:Ups}.
 In particular, given $0 < \be^2 < 4\pi$, 
$\{\U_N\}_{N \in \N}$ 
forms a Cauchy sequence  in $L^p(\O;L^q([0,T];W^{-\al,\infty}(\T^2)))$
 for any finite $p,q\ge 1$ and  $\al>\frac{\be^2}{4\pi}$.
 Then, 
local well-posedness 
of \eqref{wN}, uniformly in $N$, 
follows from a standard contraction argument, 
using 
the Strichartz estimates, 
certain product estimates, and the fractional chain rule.
See Section~\ref{SEC:3}.
The restriction $\be^2 < 2\pi$ appears 
due to a weaker smoothing property in the current wave setting.
See Remark \ref{REM:threshold}.

\begin{remark}\rm
Invariant Gibbs measures for nonlinear wave equations have been studied
extensively, starting with the work \cite{Fri}.
See the survey papers \cite{Oh, BOP4} for the references therein.
In the context of 
the (deterministic) sine-Gordon equation \eqref{SdSG2}, 
McKean \cite{McKean94}
studied the 
 one-dimensional case
and constructed an invariant Gibbs measure for \eqref{SdSG2} on $\T$.
A small adaptation of our argument for proving Theorem \ref{THM:GWP}
allows us to prove
almost sure global well-posedness 
and invariance of the (renormalized) Gibbs measure $\rhoo$
for the (deterministic, renormalized) sine-Gordon equation \eqref{SdSG2} on $\T^2$
for $0 < \be^2 < 2\pi$.

\end{remark}
	
\begin{remark}\label{REM:app}\rm 
In this paper, 
we use a smooth frequency projector 
$\P_N$ with the multiplier $\chi_N$ in \eqref{chi}.
As in the parabolic case, 
it is possible to show that 
the limiting Gibbs measure $\rhoo$ in Theorem~\ref{THM:Gibbs}
and 
the limit $(u, \dt u)$ of $(u_N, \dt u_N)$ in Theorem~\ref{THM:GWP}
are independent of the choice of the smooth cutoff function $\chi$.
See~\cite{OOTz} for such an argument
in the wave case (with a polynomial nonlinearity).
Moreover, 
we may also proceed by smoothing via a mollification
and obtain analogous results.

\end{remark}

\begin{remark}\label{REM:PSG}\rm
As mentioned above, 
the Da Prato-Debussche approach suffices to prove local  well-posedness 
for the parabolic sine-Gordon model \eqref{PSG} in the range $0<\be^2<4\pi$; see the discussion before Theorem~2.1 in \cite{HS}. 
Indeed, with the Da Prato-Debussche decomposition $u_N = w_N + \Psi$, 
where $\Psi$ is the stochastic convolution for the heat case, 
we see that the residual part $w_N$ satisfies 
 \begin{align}\label{wN2}
\dt w_N +\tfrac 12 (1-\Dl)w_N  + \Im\P_N\big\{e^{i\be \P_Nw_N}\U_N\big\}=0.
\end{align}

\noi
Here, 
the  imaginary Gaussian multiplicative chaos $\U_N$
in the heat case has exactly the same regularity as in the wave case stated in Lemma \ref{LEM:Ups}.
Namely, it has the spatial regularity $-\al < - \frac{\be^2}{4\pi}$.
Then, in view of the two degrees of smoothing under the 
heat propagator (the Schauder estimate), 
local well-posedness of \eqref{wN2} for $w_N$ 
in the class
$C([0, T]; W^{2- \al, \infty}(\T^2))$
follows easily from the product estimate (Lemma \ref{LEM:toolbox}\,(iv)),
provided that $\al <  2  -\al$, namely $\be^2 < 4\pi$.

Therefore, combining this local well-posedness, 
the construction of the Gibbs measure (Theorem \ref{THM:Gibbs}), 
and Bourgain's invariant measure argument, 
we conclude almost sure global well-posedness
and invariance of the renormalized Gibbs measure $\rho$ in \eqref{Gibbs4}
for
the parabolic sine-Gordon model \eqref{PSG}.

\end{remark}

\begin{remark}\label{REM:threshold}\rm
In our wave case, 
the linear propagator $\D(t)$ provides
only one degree of smoothing, 
thus requiring $\al - 1 < -\al$ in proving local well-posedness.
This gives the restriction of $\be^2 < 2\pi$ in Theorem~\ref{THM:GWP}.
In view of Theorem \ref{THM:Gibbs}, 
it is therefore of very much interest to study 
further local well-posedness
of the hyperbolic SdSG \eqref{SdSG} for $2\pi \le \be^2 < 4\pi$.

As pointed out in 
\cite{HS, ORW},
the difficulty of the sine-Gordon model on $\T^2$ is heuristically comparable to the one for the dynamical 
$\Phi^3_3$-model when $\be^2=2\pi$.
Namely, when $\be^2 = 2\pi$, 
the hyperbolic SdSG \eqref{SdSG} corresponds to the quadratic SNLW \eqref{SNLW} on $\T^3$
(with $k = 2$).
In \cite{GKO2}, 
Gubinelli, Koch, and the first author proved local well-posedness
of the quadratic SNLW on $\T^3$ by combining
the paracontrolled approach with multilinear harmonic analysis.
Furthermore, in order to replace a commutator argument (which does not provide any smoothing in the dispersive\,/\,hyperbolic setting), they also introduce paracontrolled operators.
Hence, in order to treat the hyperbolic SdSG \eqref{SdSG} for $\be^2 = 2\pi$, 
we plan to adapt the paracontrolled approach as in \cite{GKO2}.

\end{remark}

 \section{Imaginary Gaussian multiplicative chaos
  and construction of the Gibbs measure}
 \label{SEC:2}

In this section, we briefly go over  the regularity and convergence properties of 
the imaginary Gaussian multiplicative chaos $\U_N
= \, :\!e^{i\be\Psi_N}\!:$ defined in \eqref{Ups}. We then proceed to the construction of the Gibbs measure
$\rho$ as stated in Theorem \ref{THM:Gibbs}.
 
 \subsection{Imaginary Gaussian multiplicative chaos}

 In the following, we review 
  the regularity and convergence properties of 
  the truncated stochastic convolution $\Psi_N = \P_N \Psi$,
  where $\Psi$ is as in \eqref{PsiN},  and $\U_N$ in \eqref{Ups}. 
We use the following notation as in \cite{ORSW};  given two functions $f$ and $g$ on $\T^2$, 
we write  
\begin{align*}
f\approx g
\end{align*}

\noi
 if there exist some constants $c_1,c_2\in\R$ such that $f(x)+c_1 \leq g(x) \leq f(x)+c_2$ for any $x\in\T^2\backslash \{0\}
 \cong [-\pi, \pi)^2 \setminus\{0\}$.

We first state the regularity and convergence properties of $\Psi_N$.
See  \cite{GKO, GKO2, ORTz}.

 \begin{lemma}\label{LEM:psi}
Given any $T, \eps>0$ and finite $p\ge 1$, 
$\{(\Psi_N,\dt\Psi_N)\}_{N\in\N}$ is a Cauchy sequence in $L^p(\O;C([0,T];\H^{-\eps}(\T^2)))$, 
thus converging to some limiting process  $(\Psi,\dt\Psi)\in L^p(\O;C([0,T];\H^{-\eps}(\T^2)))$. Moreover, $(\Psi_N,\dt\Psi_N)$ converges almost surely to $(\Psi,\dt\Psi)$ in $C([0,T];\H^{-\eps}(\T^2))$.
 \end{lemma}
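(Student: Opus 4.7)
The plan is to decompose $\Psi_N = \P_N \Psi$ using \eqref{PsiN} into the contributions of the random initial data and the stochastic convolution, and to handle each piece with standard Gaussian-process techniques. Explicitly, write
\begin{align*}
\Psi_N(t) = \dt\D(t)\P_N u_0 + \D(t)\P_N(u_0+v_0) + Z_N(t),
\qquad
Z_N(t) \deff \sqrt{2}\int_0^t \D(t-t')\P_N\, dW(t').
\end{align*}
The Fourier multipliers of $\D(t)$ and $\dt\D(t)$ are uniformly bounded by $Ce^{-t/2}\jb{n}^{-1}$ and $Ce^{-t/2}$ respectively and are smooth in $t$; combined with the fact that $\P_N u_0 \to u_0$ and $\P_N v_0 \to v_0$ in $L^p(\O; H^{-\eps/2}(\T^2))$ (a routine Gaussian computation using the explicit series \eqref{series} for the law $\muu_1$), this shows that the initial-data contributions and their time derivatives form Cauchy sequences in $L^p(\O; C([0,T]; \H^{-\eps}(\T^2)))$ and converge almost surely. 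The core of the argument is therefore to control $Z_N$ and $\dt Z_N$.

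For the stochastic convolution, the key observation is that $Z_N$ lies in the first Wiener chaos, so all $L^p(\O)$-norms are comparable to the $L^2(\O)$-norm by Nelson's hypercontractivity estimate. For fixed $t$, the It\^o isometry combined with $|\ft\D(t,n)|\les\jb{n}^{-1}$ yields, for $M > N$,
\begin{align*}
\E\bigl[\|Z_M(t)-Z_N(t)\|_{H^{-\eps}}^2\bigr] \les \sum_{n\in\Z^2}\frac{|\chi_M(n)-\chi_N(n)|^2}{\jb{n}^{2+2\eps}} \les N^{-2\eps},
\end{align*}
uniformly in $t\in [0,T]$. The corresponding bound in $H^{-1-\eps}$ for $\dt Z_N$ follows in the same way from $|\dt\ft\D(t,n)|\les 1$, using that $\dt Z_N(t) = \sqrt{2}\int_0^t \dt\D(t-t')\P_N dW(t')$ since $\D(0) = 0$.

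To upgrade these pointwise-in-$t$ estimates to estimates in $C([0,T];\,\cdot\,)$, I would interpolate between the bounds $|\ft\D(t,n)|\les\jb{n}^{-1}$ and $|\dt\ft\D(t,n)|\les 1$ to obtain the H\"older-type estimate
\begin{align*}
|\ft\D(t,n)-\ft\D(s,n)|\les |t-s|^{\ta}\jb{n}^{\ta-1}, \qquad \ta \in [0,1],
\end{align*}
and repeat the It\^o-isometry computation to get
$\E\bigl[\|(Z_M-Z_N)(t)-(Z_M-Z_N)(s)\|_{H^{-\eps}}^2\bigr] \les N^{-\eps'}|t-s|^{\eps'}$
for some small $\eps'>0$ (after choosing $\ta$ as a function of $\eps$ so that the resulting frequency sum still converges). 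Hypercontractivity promotes this to an arbitrarily high moment bound, so Kolmogorov's continuity criterion in $t$ yields the Cauchy property in $L^p(\O; C([0,T]; \H^{-\eps}(\T^2)))$ at a polynomial rate in $N$, and almost sure convergence follows by a Borel--Cantelli argument applied to a geometric subsequence of $N$. The main technical point is balancing the gain in $N$ against the gain in $|t-s|$ inside a single second-moment bound; the H\"older interpolation above is exactly what makes this possible, and once it is in place the remainder is a routine application of Wiener-chaos calculus.
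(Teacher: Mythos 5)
The paper itself does not prove Lemma~\ref{LEM:psi}; it simply cites \cite{GKO, GKO2, ORTz}. Your reconstruction follows exactly the standard route taken in those references: decompose $\Psi_N$ into the linear evolution of $\P_N(u_0,v_0)$ plus the truncated stochastic convolution $Z_N$, compute second moments of Fourier coefficients via the It\^o isometry, upgrade to arbitrary $L^p(\O)$ moments by Wiener-chaos hypercontractivity (equivalently, Gaussian integrability), interpolate the kernel bounds $|\ft\D(t,n)|\les\jb{n}^{-1}$ and $|\dt\ft\D(t,n)|\les 1$ to obtain a H\"older-in-time second-moment estimate with a simultaneous power gain in $N$, and close with Kolmogorov continuity plus Borel--Cantelli on a geometric subsequence. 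This is correct, and it is the intended argument.

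One small inaccuracy: since $\L(v_0)=\mu_0$, the component $v_0$ is spatial white noise and lies a.s.\ in $H^{-1-\eps'}(\T^2)$ but not in $H^{-\eps/2}(\T^2)$, so the claim ``$\P_N v_0 \to v_0$ in $L^p(\O; H^{-\eps/2})$'' should read ``in $L^p(\O;H^{-1-\eps/2})$.'' This does not affect the conclusion, because $\dt\D(t)$ has a bounded Fourier multiplier and $\dt^2\D(t)$ a multiplier bounded by $C\jb{n}$, so the initial-data contribution to $(\Psi_N,\dt\Psi_N)$ still converges in $C([0,T];\H^{-\eps}(\T^2))$; but you should state the regularity of $v_0$ correctly. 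With that fix the argument is complete.
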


Let   $G = (1-\Dl)^{-1}\dl_0$ denote the Green function for $1-\Dl$.
Then, recall from \cite[Lemma 2.3]{ORSW} that for all $N\in\N$ and $x\in\T^2\setminus\{0\}$, we have
\begin{align}\label{G1}
\P_N^2G(x) \approx -\frac1{2\pi}\log\big(|x|+N^{-1}\big).
\end{align}

\noi
Using \eqref{G1}, 
we can proceed as in  the proof of Lemma 2.7 in \cite{ORSW} 
and show that 
 for any $t\ge 0$, the covariance function:
\begin{align*}
\G_N(t,x-y)\deff \E\big[\Psi_N(t,x)\Psi_N(t,y)\big]
\end{align*}
satisfies
\begin{align}\label{covar2}
\G_N(t,x-y)\approx -\frac1{2\pi}\log\big(|x-y|+N^{-1}\big).
\end{align}

\noi
For our problem, 
the stochastic convolution  $\Psi$ defined in \eqref{PsiN}
is a stationary process and thus  $\G_N$ is independent of $t$.
Compare this 
with  the time-dependent case in \cite{ORSW}; see (2.23) in \cite{ORSW}.

Next, we state the regularity and convergence properties of $\U_N$.
%
%
\begin{lemma}\label{LEM:Ups}
Let $0<\be^2<4\pi$. Then,  for any finite $p,q\ge 1$, $T>0$, and  $\al>\frac{\be^2}{4\pi}$, $\{\U_N\}_{N\in\N}$ is a Cauchy sequence in $L^p(\O;L^q([0,T];W^{-\al,\infty}(\T^2)))$
and  hence converges to a limiting process  $\U$ in $L^p(\O;L^q([0,T];W^{-\al,\infty}(\T^2)))$.
\end{lemma}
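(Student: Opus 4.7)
The plan is to proceed along the lines of the argument carried out in our previous work \cite{ORSW} (compare also \cite{HS, LRV2}). The proof has three ingredients: a Besov-embedding reduction, an explicit Gaussian computation of multi-point correlations of $\U_N$, and an Onsager-type bound for the resulting neutral log-gas integrals.

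First I would use the embedding $B^{s+\frac2r+\eps}_{r,r}(\T^2)\hookrightarrow W^{s,\infty}(\T^2)$ (valid for $r<\infty$, $\eps>0$) together with Minkowski's inequality in $\o$ and $t$ to reduce the claim to the following uniform Littlewood-Paley block estimate: for every $\alpha'>\frac{\be^2}{4\pi}$ and every large even integer $r$,
\[
\sup_{N\in\N}\,\sup_{t\in[0,T],\,x\in\T^2}\E\bigl[|\Delta_j\U_N(t,x)|^r\bigr]\les 2^{\alpha' r j}
\]
for each Littlewood-Paley index $j\ge -1$, together with the analogous bound for $\U_N-\U_M$ carrying an additional gain $\min(N,M)^{-\dl}$ for some $\dl>0$. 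Both bounds will be uniform in $t$ because $\Psi$ is stationary in time, so $\G_N$ in~\eqref{covar2} does not depend on $t$; consequently the $L^q([0,T])$ norm in time costs only a factor $T^{1/q}$.

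For the moment bound, the Gaussianity of $\Psi_N$ and the choice $\g_N=e^{\be^2\s_N/2}$ yield, by direct computation of the Gaussian characteristic function, the explicit $2n$-point identity
\[
\E\bigg[\prod_{k=1}^n\U_N(t,x_k)\prod_{l=1}^n\overline{\U_N(t,y_l)}\bigg]
=\exp\Big(-\be^2\!\!\sum_{k<k'}\!\G_N(x_k-x_{k'})-\be^2\!\!\sum_{l<l'}\!\G_N(y_l-y_{l'})+\be^2\sum_{k,l}\G_N(x_k-y_l)\Big),
\]
the divergent prefactor $\g_N^{2n}e^{-n\be^2\s_N}$ cancelling exactly by charge neutrality. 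Combined with~\eqref{covar2}, this expresses the correlation as a neutral log-gas kernel whose exponents all have absolute value $\be^2/(2\pi)<2$, hence are locally integrable in dimension two. For $n=1$ one obtains at once $\E\bigl[\U_N(t,x)\overline{\U_N(t,y)}\bigr]\les (|x-y|+N^{-1})^{-\be^2/(2\pi)}$, and integrating against the $\Delta_j$-kernel yields $\E|\Delta_j\U_N(t,x)|^2\les 2^{j\be^2/(2\pi)}$ uniformly in $N,t,x$, which is the required bound with $\alpha'=\be^2/(4\pi)+\eps$ at level $r=2$. For higher even $r=2n$, the standard Onsager / charge-neutrality estimate for the $2n$-fold log-gas integral (applied exactly as in \cite[Sec.~2]{ORSW}) upgrades this to the Gaussian-type scaling $\E|\Delta_j\U_N(t,x)|^{2n}\les \bigl(2^{j\be^2/(2\pi)}\bigr)^n$, again uniformly in $N,t,x$. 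I expect this Onsager-type step to be the only genuinely technical point; the rest is routine.

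The Cauchy property then follows by applying the same computation to the difference. For instance, the two-point function of $\U_N-\U_M$ equals $e^{\be^2\G_N}+e^{\be^2\G_M}-2e^{\be^2\G_{N,M}}$ with $\G_{N,M}(x-y)=\E[\Psi_N(t,x)\Psi_M(t,y)]$, and by a Taylor expansion of the exponentials the combination extracts a factor $\G_N-\G_{N,M}$ (and symmetrically) which, by the quantitative form of \eqref{covar2} at the two scales $N$ and $M$, decays like $\min(N,M)^{-\dl}$ on the macroscopic part of $\T^2$; the contribution of the microscopic region $|x-y|\les\min(N,M)^{-1}$ is absorbed by the integrability margin $2-\be^2/(2\pi)>0$. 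The analogous identity at the $2n$-point level, combined with the same Onsager bound, then closes the Cauchy estimate and produces the limit $\U\in L^p(\O;L^q([0,T];W^{-\al,\infty}(\T^2)))$ for every $\al>\be^2/(4\pi)$, as claimed.
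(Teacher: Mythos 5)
Your proposal follows essentially the same route as the paper. The paper's proof simply defers to Proposition 1.1 of \cite{ORSW}, whose argument uses exactly the ingredients you spell out: the Besov-embedding reduction to Littlewood-Paley block moments, the explicit $2n$-point Gaussian identity with the charge-neutral cancellation of the prefactor $\g_N^{2n}e^{-n\be^2\s_N}$, and the Onsager-type electrostatic bound for the neutral log-gas (this is precisely \cite[Lemma~2.5]{ORSW}, which the paper cites). You also correctly identify the one structural change relative to \cite{ORSW}: stationarity of $\Psi$ makes $\G_N$ time-independent, so the LP block bounds are uniform in $t$ and the $L^q_T$ norm is handled trivially by Minkowski/Fubini rather than producing a time-dependent regularity threshold.
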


Due to the stationarity of
$\Psi$, 
 we have $\g_N = e^{\frac{\be^2}{2}\s_N}$ in \eqref{CN} independent of time. 
 This is the reason why, contrary to \cite[Proposition 1.1]{ORSW}, 
 the regularity of $\U$ in Lemma \ref{LEM:Ups} 
is independent of time.
Compare this with Proposition 1.1 in \cite{ORSW}
for the undamped wave case, 
where 
the regularity of the relevant 
imaginary Gaussian multiplicative chaos
decreases over time.

\begin{proof}
 Lemma \ref{LEM:Ups} follows from  a straightforward 
 modification of the proof of  Proposition~1.1 in \cite{ORSW}
on the construction of the imaginary Gaussian multiplicative chaos
in  the undamped wave case.
Namely, using Minkowski's integral inequality, 
it suffices to establish  convergence of $\U_N(t,x)$ for any fixed $t\ge 0$ and $x\in\T^2$,
which  follows from the  argument  in \cite[Proposition 1.1]{ORSW} by replacing \cite[Lemma 2.7]{ORSW} with \eqref{covar2}.  In particular, 
$\frac{\be^2 t}{4\pi}$ in \cite{ORSW} is replaced by 
$\frac{\be^2 }{2\pi}$.
In establishing this lemma (and Proposition 1.1 in \cite{ORSW}), 
we need to exploit a key cancellation property
of charges; 
see Lemma 2.5 in   \cite{ORSW}.
\end{proof}

 \subsection{Construction of the Gibbs measure}

In this subsection, we present a proof of Theorem \ref{THM:Gibbs}.
The main task here is to 
to establish the uniform integrability \eqref{exp1}
of the densities $e^{R_N(u)}$
of  the  weighted Gaussian measures 
$\rhoo_N$ in \eqref{GibbsN}.
For this purpose, 
we use the variational approach
due to 
Barashkov and Gubinelli~\cite{BG}
and
express the partition function $\ZZ_N$ in \eqref{GibbsN}
in terms of a minimization problem
 involving a stochastic control problem
 (Lemma \ref{LEM:vari}). 
 We then study the minimization problem
 and 
establish uniform boundedness of the 
partition function $\ZZ_N$. 
Our argument follows that in Section 4 of \cite{GOTW}.

From \eqref{GibbsN}
and integrating over $\mu_0(v)$, 
we can express the partition function $\ZZ_N$ as 
\begin{equation}
\ZZ_N=  \int e^{ R_N(u)} d\mu_1(u).
\label{Y1}
\end{equation}

\noi
We first show the following convergence property of~$R_N$.
\begin{lemma}\label{LEM:R1}
Given any finite $p \geq 1$, 
$R_N $ defined in \eqref{RN} converges to some limit $R$ in $L^p( \mu_1)$ as $N \to \infty$.

\end{lemma}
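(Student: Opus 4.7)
The plan is to realize $R_N$ as a spatial integral of (the real part of) the imaginary Gaussian multiplicative chaos built from the free field $u \sim \mu_1$, and then deduce $L^p$-convergence from (a fixed-time version of) Lemma~\ref{LEM:Ups}. Writing $\cos(\be \P_N u) = \Re\, e^{i\be\P_N u}$, we have
\[
R_N(u) = \frac{1}{\be}\, \Re \int_{\T^2} \g_N\, e^{i\be \P_N u(x)}\, dx,
\]
and the integrand $\g_N\, e^{i\be \P_N u(\cdot)}$ is exactly the expression defining $\U_N$ in \eqref{Ups}, with the stochastic convolution $\Psi_N(t,\cdot)$ replaced by $u$. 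Since the linear stochastic damped wave equation \eqref{SdLW} with Gaussian initial data distributed according to $\muu_1$ is stationary, in the sense that $\L(\Psi(t)) = \mu_1$ for every $t \geq 0$, the two random fields have identical finite-dimensional distributions; equivalently, by \eqref{G1} the covariance of $\P_N u$ exhibits the same logarithmic asymptotics \eqref{covar2} used in the proof of Lemma~\ref{LEM:Ups}.

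Given this identification, I would repeat the moment computation underlying Lemma~\ref{LEM:Ups} (i.e.\ the proof of \cite[Proposition~1.1]{ORSW}) for the field above, at fixed $t$ and integrated against the constant test function $1$. Explicitly, for any integer $k \geq 1$, I would expand
\[
\E_{\mu_1}\bigl[|R_N - R_M|^{2k}\bigr]
\]
into a sum of $2k$-fold spatial integrals of Gaussian characteristic functions of linear combinations of $\P_N u(x_j)$ and $\P_M u(x_j)$. Using \eqref{G1}, each such characteristic function equals, up to harmless bounded multiplicative factors, $\prod_{j<l}(|x_j - x_l| + N^{-1})^{-\be^2 \eps_j \eps_l/(2\pi)}$ for certain charges $\eps_j \in \{\pm 1\}$. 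The subcritical hypothesis $\be^2 < 4\pi$ yields $\be^2/(2\pi) < 2$, which gives local integrability of each pairwise kernel in $\R^2$ in the neutral case.

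The main obstacle is to control the non-neutral configurations (those with $\sum_j \eps_j \neq 0$), where the pairwise bound alone fails to absorb the divergence of $\g_N$. This is precisely where the charge cancellation identity \cite[Lemma~2.5]{ORSW}, already invoked in the proof of Lemma~\ref{LEM:Ups}, enters, producing the extra small factor that restores a uniform bound. Once uniform boundedness of the $2k$-th moment is established, convergence of the underlying covariance kernels as $N, M \to \infty$ upgrades this to Cauchy-ness in $L^{2k}(\mu_1)$ via dominated convergence, for every integer $k \geq 1$. Convergence in $L^p(\mu_1)$ for all finite $p \geq 1$ then follows by interpolation between $L^1(\mu_1)$ and a sufficiently high even moment.
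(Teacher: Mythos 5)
Your proposal is correct and follows essentially the same route as the paper: both identify $R_N(u)$ with the (real part of the) spatial integral of the imaginary Gaussian multiplicative chaos $\U_N$ evaluated at time $t=0$, where $\Psi(0) = u_0 \sim \mu_1$, and then invoke the fixed-time moment computations underlying Lemma~\ref{LEM:Ups} (i.e.\ the proof of Proposition~1.1 in \cite{ORSW}). The paper is terser, simply citing that convergence of $\U_N(t,x)$ in $L^p$ is established pointwise in $(t,x)$, whereas you unpack the $2k$-fold Coulomb-type integral expansion and the charge-cancellation step; but these details are exactly what is imported from \cite{ORSW}, so there is no substantive difference.
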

\begin{proof}
Let $\L(u) = \mu_1$.
Then, 
from \eqref{RN}, \eqref{CN},  and \eqref{Ups}, we have
\begin{align*}
R_N(u) 
=  \frac{1}{\be}\int_{\T^2}\Re \big(:\!e^{i \be \P_N u}\!: \big)dx 
=  \frac{2\pi}{\be}\Re \ft\U_N(t,n)\big|_{(t, n) = (0, 0)}, 
\end{align*}

\noi
where $\ft \U_N(t, n)$ denotes the spatial Fourier transform at time $t$
and the frequency $n$.
Then, 
Lemma~\ref{LEM:R1} is a direct consequence of the proof of~Lemma \ref{LEM:Ups} (see the proof of Proposition~1.1 in \cite{ORSW}) since the convergence of $\U_N(t,x)$ 
in $L^p(\mu_1)$ is established for any fixed $t\ge 0$ and $x\in\T^2$. 
\end{proof}

Next, we prove the uniform integrability \eqref{exp1}.
Once we prove \eqref{exp1}, 
the desired convergence
\eqref{exp2}
of the density  follows from a standard argument, using 
 Lemma~\ref{LEM:R1} with~\eqref{exp1}. See \cite[Remark 3.8]{Tzvetkov}.
See also the proof of 
Proposition~1.2 in \cite{OTh}.

In order to  prove \eqref{exp1}, 
we follow the argument in \cite{BG,GOTW} 
and derive a variational formula for the partition function
$\ZZ_N$ in \eqref{Y1}. 
Let us first introduce some notations.
See also Section 4 in  \cite{GOTW}. 
Let $W(t)$ be the cylindrical 
 Wiener process  in \eqref{W1}.
 We  define a centered Gaussian process $Y(t)$
by 
\begin{align}
Y(t)
=  \jb{\nabla}^{-1}W(t), 
\label{Y2}
\end{align}

\noi
where $\jb{\nb} = \sqrt{1-\Dl}$.
Then, 
we have $\L(Y(1)) = \mu_1$. 
By setting  $Y_N = \P_NY $, 
we have   $\L(Y_N(1)) = (\P_N)_\#\mu_1$. 
In particular, 
we have  $\E [Y_N(1)^2] = \s_N$,
where $\s_N$ is as in \eqref{sN}.

Next, let $\Ha$ denote the space of drifts, which are the progressively measurable processes that belong to
$L^2([0,1]; L^2(\T^2))$, $\PP$-almost surely. 
Given a drift $\dr \in \Ha$, 
we  define the measure $\Q^\dr$ 
whose Radon-Nikodym derivative with 
respect to $\PP$ is given by the following stochastic exponential:
\begin{align*}
\frac{d\Q^\dr}{d\PP} = e^{\int_0^1 \jb{\dr(t),  dW(t)} - \frac{1}{2} \int_0^1 \| \dr(t) \|_{L^2_x}^2dt},
\end{align*}

\noi
where $\jb{\cdot, \cdot}$ stands for the usual  inner product on $L^2(\T^2)$.
Then, by letting  $\Hc$ denote the subspace of $\Ha$ consisting of drifts such that $\Q^\dr(\O) = 1$,
it follows from Girsanov's theorem (\cite[Theorem 10.14]{DPZ14} and \cite[Theorems 1.4 and 1.7 in Chapter VIII]{RV})
 that $W$ is a semi-martingale under $\Q^\dr$
 with the following  decomposition:
\begin{align}
 W(t) = W^\dr(t) + \int_0^t \dr(t')dt',
\label{Y3}
\end{align}

\noi
 where $W^\dr$ is now a $L^2(\T^2)$-cylindrical Wiener process under
 the new measure $\Q^\dr$.
Substituting \eqref{Y3} in \eqref{Y2} leads to the decomposition:
\begin{align*}
 Y = Y^\dr + \I(\dr), 
\end{align*}

\noi
where 
\begin{align*}
Y^\dr(t) = \jb{\nabla}^{-1} W^\dr(t)\qquad \textup{and}\qquad \I(\dr)(t) = \int_0^t \jb{\nabla}^{-1} \dr(t') dt'.
\end{align*}
In the following, we use $\E$ to denote an expectation 
with respect to $\PP$,
while we use $\E_\Q$ for an expectation with respect to some other probability measure $\Q$.

Proceeding  as  in \cite[Lemma 1]{BG} and \cite[Proposition 4.4]{GOTW}, we then have the following variational formula for the partition function $\ZZ_N$ in \eqref{Y1}.

\begin{lemma} \label{LEM:vari}
For any $N \in \N$, we have
\begin{align} \label{vari}
- \log \ZZ_N = \inf_{\dr \in \Hc} \E_{\Q^\dr} 
\bigg[ - R_N(Y^\dr(1) + \I(\dr)(1)) + \frac{1}{2} \int_0^1 \| \dr(t) \|_{L^2_x}^2 dt \bigg].
\end{align}

\end{lemma}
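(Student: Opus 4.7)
The plan is to derive \eqref{vari} as a direct application of the Bou\'e--Dupuis variational formula, following the approach of \cite{BG} and \cite[Proposition 4.4]{GOTW}. The starting point is the identity $\ZZ_N = \E[e^{R_N(Y(1))}]$, which follows from \eqref{Y1} together with $\L(Y(1)) = \mu_1$. A key simplification in the present setting, compared with the $\Phi^4_3$-analysis of \cite{BG,GOTW}, is that $R_N$ is \emph{bounded}: since $|\cos(\be\P_N u)| \le 1$, \eqref{RN} gives $|R_N(u)| \le (2\pi)^2 |\be|^{-1} \g_N$ uniformly in $u$. Consequently, no further truncation or renormalization of the functional is required, and the classical Bou\'e--Dupuis argument applies directly for each fixed $N$.

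For the upper bound $-\log \ZZ_N \le \inf_{\dr \in \Hc}(\cdots)$, I would fix an arbitrary $\dr \in \Hc$ and change measure via Girsanov \eqref{Y3}. Using the decomposition $Y(1) = Y^\dr(1) + \I(\dr)(1)$ under $\Q^\dr$ together with
\begin{align*}
\frac{d\PP}{d\Q^\dr} = \exp\Big(- \int_0^1 \jb{\dr(t),dW^\dr(t)} - \tfrac{1}{2}\int_0^1 \|\dr(t)\|_{L^2_x}^2 dt\Big),
\end{align*}
one rewrites $\ZZ_N$ as an expectation under $\Q^\dr$. Taking logarithms, applying Jensen's inequality in the form $\log \E_{\Q^\dr}[e^{X}] \ge \E_{\Q^\dr}[X]$, and noting that $\E_{\Q^\dr}\bigl[\int_0^1 \jb{\dr(t),dW^\dr(t)}\bigr] = 0$ because $W^\dr$ is a $\Q^\dr$-cylindrical Wiener process and $\dr$ is progressively measurable, one obtains the desired inequality after taking the infimum over $\dr \in \Hc$.

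For the matching lower bound, I would produce a (near-)optimal drift via the martingale representation theorem applied to the bounded $(\F_t)$-martingale $M_t \deff \E[e^{R_N(Y(1))} \mid \F_t]$: writing $M_t = M_0 + \int_0^t \jb{Z(s),dW(s)}$ and setting $\dr^\ast(t) = Z(t)/M_t$, It\^o's formula applied to $\log M_1$ together with Girsanov yields equality in \eqref{vari}. The main (and essentially only) technical point is to verify that $\dr^\ast \in \Hc$, i.e.\ that $\Q^{\dr^\ast}$ is a probability measure; this reduces to Novikov's condition, which holds immediately here because the two-sided bound on $R_N$ forces $M_t$ to be bounded away from $0$ and $\infty$ (with bounds depending on $\g_N$). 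Thus the hard part of this lemma is not really the variational identity itself, but rather that this identity is the only useful tool to control $\ZZ_N$ \emph{uniformly in $N$} in the subsequent analysis, where the $N$-dependence of $\g_N$ becomes critical; for the lemma as stated, however, the bounded-functional Bou\'e--Dupuis formula suffices.
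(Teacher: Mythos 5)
Your overall strategy is sound and coincides with what the paper has in mind: the paper does not spell out a proof, but instead states that the lemma ``follows from a straightforward modification of the proof of Proposition~4.4 in \cite{GOTW}'' (which is itself a Bou\'e--Dupuis/Girsanov argument descended from \cite{BG}). Your observation that $R_N$ is \emph{bounded} for fixed $N$ --- so that the bounded-functional form of Bou\'e--Dupuis applies without the truncation-and-approximation steps that are needed for the unbounded $\Phi^4$-type densities in \cite{BG,GOTW} --- is correct and is precisely the simplification that makes the modification ``straightforward.'' The upper-bound half (Girsanov, then Jensen, then zero mean of $\int_0^1 \jb{\dr, dW^\dr}$, after restricting to drifts with $\E_{\Q^\dr}\int_0^1\|\dr\|_{L^2_x}^2\,dt<\infty$, which is harmless since $R_N$ is bounded and the other drifts give $+\infty$) is standard and correct.

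The one step you should tighten is the verification that $\dr^\ast\in\Hc$. You say this ``reduces to Novikov's condition, which holds immediately here because \dots\ $M_t$ is bounded away from $0$ and $\infty$.'' That inference does not go through: boundedness of the martingale $M_t=\E[e^{R_N(Y(1))}\mid\F_t]$ gives $\E\int_0^1\|Z\|_{L^2_x}^2\,dt<\infty$ and even \emph{some} exponential moment of $\int_0^1\|Z\|_{L^2_x}^2\,dt$ (via Dambis--Dubins--Schwarz and exit times), but the admissible exponent degrades with $\sup M_t/\inf M_t\sim e^{cC_N}$, while Novikov asks for a \emph{fixed} exponent $\tfrac12$ applied to $\|\dr^\ast\|^2\le (\inf M_t)^{-2}\|Z\|^2$; with $\g_N\to\infty$ this does not close. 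You do not need Novikov at all: the It\^o computation you already carry out shows
\begin{align*}
\exp\Big(\int_0^t\jb{\dr^\ast(s),dW(s)}-\tfrac12\int_0^t\|\dr^\ast(s)\|_{L^2_x}^2\,ds\Big)=\frac{M_t}{M_0},
\end{align*}
and the right-hand side is a nonnegative $\PP$-martingale with mean $1$ by construction of $M_t$. Hence the stochastic exponential is a true martingale, $\Q^{\dr^\ast}(\O)=1$, and $\dr^\ast\in\Hc$, with no appeal to Novikov. With this adjustment, the rest of your lower-bound argument (Girsanov under $\Q^{\dr^\ast}$, zero mean of $\int_0^1\jb{\dr^\ast,dW^{\dr^\ast}}$) gives exactly the claimed equality.
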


 Lemma \ref{LEM:vari} follows from a straightforward modification
 of the proof of Proposition 4.4 in  \cite{GOTW}
 and thus we omit details.
  In the following, we 
   use Lemma~\ref{LEM:vari} and show that the infimum in~\eqref{vari}
is bounded away from $- \infty$, uniformly in $N \in \N$,
which 
establishes the uniform bound \eqref{exp1}.
To this end, we first state the following lemma to estimate $Y^\dr(1)$ and $\I(\dr)(1)$.

\begin{lemma}  \label{LEM:Y}
Let $Y^\dr(1)$ and $\I(\dr)(1)$ be as above.

\smallskip

\noi
\textup{(i)} Let $0<\be^2<4\pi$ and $\al>\frac{\be^2}{4\pi}$.
 Then,  for any finite $p \ge 1$, we have 
\begin{align}
\sup_{\dr \in \Hc} \E_{\Q^\dr} 
\| \,:\!e^{ i\be Y^\dr(1)}\!:\, \|_{W^{-\al,\infty}_x}^p  <\infty.
\label{Y7}
\end{align}

\smallskip

\noi
\textup{(ii)} For any $\dr \in \Hc$, we have
\begin{align}
	\| \I(\dr)(1) \|_{H^{1}_x}^2 \leq \int_0^1 \| \dr(t) \|_{L^2_x}^2dt.
\label{Y8}
	\end{align}
\end{lemma}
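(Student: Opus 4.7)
The plan is to split the lemma into its two parts, which are essentially independent. Part~(ii) is a direct Cauchy--Schwarz computation that uses nothing beyond the definition of $\I(\dr)$ and of the $H^1_x$-norm. Part~(i), the more substantive statement, will be reduced by a Girsanov-type change-of-measure argument to a fixed-time version of the imaginary Gaussian multiplicative chaos bound of Lemma~\ref{LEM:Ups}; no genuinely new stochastic estimate should be needed.

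For part~(ii), I would begin by observing that $\jb{\nabla}\I(\dr)(1) = \int_0^1 \dr(t')\,dt'$, and, using $\|f\|_{H^1_x}^2 = \|\jb{\nabla}f\|_{L^2_x}^2$, rewrite
\begin{align*}
\|\I(\dr)(1)\|_{H^1_x}^2 = \Big\|\int_0^1 \dr(t')\,dt'\Big\|_{L^2_x}^2.
\end{align*}
Minkowski's integral inequality, followed by the Cauchy--Schwarz inequality in $t'$, then yields \eqref{Y8}. This step is entirely routine and poses no difficulty.

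For part~(i), the key observation is that, under $\Q^\dr$, the decomposition \eqref{Y3} tells us that $W^\dr$ is a cylindrical Wiener process on $L^2(\T^2)$, and hence $Y^\dr(1) = \jb{\nabla}^{-1}W^\dr(1)$ has law $\mu_1$ under $\Q^\dr$, exactly as $Y(1)$ does under $\PP$. Interpreting $:\!e^{i\be Y^\dr(1)}\!:\,$ as the $L^p(\Q^\dr)$-limit of $\g_N e^{i\be \P_N Y^\dr(1)}$, this equality of laws gives
\begin{align*}
\E_{\Q^\dr}\|\!:\!e^{i\be Y^\dr(1)}\!:\!\|_{W^{-\al,\infty}_x}^p = \E\|\!:\!e^{i\be Y(1)}\!:\!\|_{W^{-\al,\infty}_x}^p,
\end{align*}
so the expectation in \eqref{Y7} is in fact \emph{independent} of $\dr$. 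The remaining task is to bound the right-hand side. Since the stochastic convolution $\Psi$ in \eqref{PsiN} is stationary in time with $\L(\Psi(1)) = \mu_1$, it suffices to invoke Lemma~\ref{LEM:Ups} at the fixed time $t=1$, which is immediate from its proof (where convergence and moment bounds for $\U_N(t,x)$ are first established at each fixed $(t,x)$ before being lifted to the $L^q_tW^{-\al,\infty}_x$-norm by Minkowski's integral inequality).

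The only point that requires a moment's attention---and what I would flag as the main conceptual step---is the rigorous meaning of $:\!e^{i\be Y^\dr(1)}\!:\,$ under the shifted measure $\Q^\dr$. This is not actually an obstacle: the identification $\L_{\Q^\dr}(Y^\dr(1)) = \L_\PP(Y(1)) = \mu_1$ allows the entire construction of the imaginary chaos to be transported from $(\O,\PP)$ to $(\O,\Q^\dr)$ verbatim, and in particular the renormalization constant $\g_N$ is the correct one under both measures. Once the invariance-in-law step is in place, the lemma follows without any further analysis.
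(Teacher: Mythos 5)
Your proof is correct and follows essentially the same route as the paper: for (i), use Girsanov to observe that $\L_{\Q^\dr}(Y^\dr(1)) = \mu_1$ independently of $\dr$, then transfer the fixed-time moment bound from (the proof of) Lemma~\ref{LEM:Ups}; for (ii), use Minkowski and Cauchy--Schwarz. Your added remark that the renormalization constant $\g_N$ is unchanged under the change of measure, because it depends only on the law $\mu_1$, is a correct and sensible clarification of why the chaos is well-defined under $\Q^\dr$.
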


\begin{proof}
(i) For any $\dr \in \Hc$,
 $W^\dr$ is a cylindrical Wiener process in $L^2(\T^2)$ under $\Q^\dr$.
 Thus,  the law of $Y^\dr(1) = \jb{\nabla}^{-1} W^\dr(1)$ under $\Q^\dr$ is always given by  $\mu_1$, so in particular,  it is independent of $\dr\in\Hc$. 
 Then,~\eqref{Y7} follows
from (the  proof of) Lemma \ref{LEM:Ups}. 

\smallskip
\noi
(ii) The proof of \eqref{Y8} is straightforward from Minkowski's and Cauchy-Schwarz's inequalities.
See the proof of Lemma 4.7 in \cite{GOTW}.
\end{proof}

We are now ready to establish the uniform integrability estimate \eqref{exp1}.
For simplicity, we only prove \eqref{exp1} for $p = 1$.
In view of Lemma \ref{LEM:vari}, we need to bound from below
\begin{align}
\W_N(\dr) = \E_{\Q^\dr} 
\bigg[ - R_N\big(Y^\dr(1) + \I(\dr)(1)\big) + \frac{1}{2} \int_0^1 \| \dr(t) \|_{L^2_x}^2 dt \bigg],	
\label{v_N0}
\end{align}
uniformly in the drift $\dr\in\Hc$ and in $N\in\N$.
To simplify notations, we fix $\dr\in\Hc$ and $N\in\N$ and drop the dependence in $\dr$ and $N$ 
in~\eqref{v_N0}. Moreover, we set $Y = Y^\dr(1) $ and $\Dr = \I(\dr)(1)$.
 
From the definition of $R_N$ in \eqref{RN}, 
we have
\begin{align*}
R_N(Y + \Dr)  & = \frac{1}{\be}\int_{\T^2}\,:\!\cos(\be(Y+\Dr))\!:\,dx\notag\\
& = \frac{1}{\be}
\int_{\T^2}\Big(\,:\!\cos(\be Y)\!: \cos(\be \Dr) \,
- :\!\sin(\be Y)\!:\sin(\be \Dr)\Big)dx.
\end{align*}

\noi
By 
duality 
between $H^\al(\T^2)$ and $H^{-\al}(\T^2)$ and 
Cauchy's inequality, we have
\begin{align}
\begin{split}
|R_N(Y+\Dr)|
&  \les \|:\!\cos(\be Y)\!:\|_{H^{-\al}}
\|\cos(\be \Dr)\big\|_{H^{\al}} + \|:\!\sin(\be Y)\!:\|_{H^{-\al}}
\|\cos(\be \Dr)\|_{H^{\al}}\\
& \les \sum_{\kk \in \{-1, 1\}}\Big\{\dl^{-1} \| : e^{i \kk \be Y}:\|_{H^{-\al}}^2 +  \dl \| e^{i \kk \be \Dr}\|_{H^\al}^2\Big\}
\end{split}
\label{Y9}
\end{align}

\noi
for any $\dl > 0$.
Using the fractional chain rule (see Lemma \ref{LEM:toolbox}\,(ii) below)
and Lemma~\ref{LEM:Y}\,(ii), we have
\begin{align}
\begin{split}
\|e^{ \pm i \be \Dr }\|_{H^{\al}} 
&\sim \|e^{\pm i \be  \Dr}\|_{L^2}+\big\||\nabla|^{\al}\big(e^{\pm  i \be  \Dr})\big)\big\|_{L^2}\\
&\les 1 + \|\Dr\|_{H^\al} \les 1+ \Big( \int_0^1 \| \dr(t) \|_{L^2_x}^2 dt \Big)^\frac{1}{2}, 
\end{split}
\label{Y10}
\end{align}

\noi
as long as $\al \leq 1.$
   Moreover, in view of Lemma~\ref{LEM:Y}\,(i), 
we have    
 \begin{align}
 \E\Big[\|:\! e^{\pm  i \be  Y}\!:\|_{H^{-\al}}^2\Big]
\les 1, 
\label{Y11}
 \end{align}
 
 \noi
 provided that $0 < \be^2 < 4\pi$ and $\al > \frac{\be^2}{4\pi}$.
Therefore,   from \eqref{v_N0}, \eqref{Y9}, \eqref{Y10}, and \eqref{Y11},  
we obtain 
\begin{align*}
\W_N(\dr) 
&\ge \E\bigg[\frac{1}{2} \int_0^1 \| \dr(t) \|_{L^2_x}^2 dt - 
C_1\dl^{-1} -C_2 \dl\Big(1+\int_0^1 \| \dr(t) \|_{L^2_x}^2 dt\Big)\bigg].
\end{align*}

\noi
By taking $\dl > 0$ sufficiently small, we conclude that there exists finite $ C(\dl)>0$ such that
\begin{align*}
\sup_{N \in \mathbb{N}} \sup_{\dr \in \Hc} \W_N(\dr) 
\geq 
\sup_{N \in \mathbb{N}} \sup_{\dr \in \Hc}
\Big\{ -C(\dl) + \frac{1}{4}\int_0^1\|\dr(t)\|_{L^2}^2dt\Big\}
 \geq - C(\dl)
>-\infty.
\end{align*}

\noi
This proves \eqref{exp1}
when $p = 1$.
 The  general case $p\ge 1$ follows from a straightforward modification.

 \section{Local well-posedness of the hyperbolic SdSG}
 \label{SEC:3}
 
In this last section, we present a proof of Theorem \ref{THM:GWP}. 
As mentioned in the introduction, 
thanks to Bourgain's invariant measure argument
and the uniform (in $N$) equivalence of
the (truncated) Gibbs measures and
the base Gaussian measure $\muu_1$, 
it suffices to prove local well-posedness and convergence of 
the truncated dynamics \eqref{RSdSGN}
with the Gaussian random initial data whose law is given by $\muu_1$.
Furthermore, 
in view of 
 the uniform (in $N$)  boundedness
of 
the frequency projector $\P_N$ on $W^{s,p}(\T^2)$, $s\in\R$, $1\le p\le \infty$, 
and the Da Prato-Debussche decomposition:
\begin{align*}
u_N =  w_N + \Psi, 
\end{align*}

\noi
it suffices to prove  local well-posedness 
of the following model equation:
\begin{align}\label{Z0}
\begin{cases}
\dt^2w + \dt w + (1-\Dl)w +  \Im\big\{e^{i\be w}\U\big\} = 0,\\
(w,\dt w)|_{t=0}=(0,0),
\end{cases}
\end{align}

\noi
for a given (deterministic) source function  $\U$.

\begin{proposition}\label{PROP:LWP}
Given $0<\al <\frac12$, 
let $\U$ be a distribution in $L^2([0,1];W^{-\al,\infty}(\T^2))$. 
 Then,  there exists $T = T\big(\|\U\|_{L^2([0,1];W^{-\al,\infty}_x)}\big)\in (0,1]$ 
 and a unique solution $w$ to \eqref{Z0} in the class:
 \begin{align*}
 X^{1-\al}(T)\deff C([0,T];H^{1-\al}(\T^2))\cap C^1([0,T];H^{-\al}(\T^2))\cap L^{\infty}([0,T];L^{\frac2\al}(\T^2)).
 \end{align*} 
 Moreover, the solution map: $\U\mapsto w$ is continuous.
\end{proposition}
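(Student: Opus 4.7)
The plan is to solve \eqref{Z0} by a Picard iteration on its Duhamel formulation. Since the initial data vanishes, a solution $w$ is a fixed point of
\begin{align*}
\Gamma(w)(t) = -\int_0^t \D(t-t')\,\Im\bigl\{e^{i\be w(t')}\U(t')\bigr\}\, dt',
\end{align*}
with $\D$ the damped wave propagator in~\eqref{D}. I will show that $\Gamma$ is a contraction on a closed ball $B_R\subset X^{1-\al}(T)$ for some fixed $R$ of order $1$ and $T=T(\|\U\|_{L^2([0,1];W^{-\al,\infty})})\in(0,1]$ chosen small.

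The two main ingredients are (i) linear estimates for the damped wave equation on $\T^2$ and (ii) a nonlinear bound on $\Im\{e^{i\be w}\U\}$ in a suitable source space. For (i), the $C([0,T];H^{1-\al})\cap C^1([0,T];H^{-\al})$ component of the $X^{1-\al}(T)$ norm follows from the energy estimate for $\D(t-s)F(s)$ (which gives exactly one degree of smoothing) together with Minkowski in time, starting from a source $F\in L^1([0,T];H^{-\al})$. The more delicate $L^\infty([0,T];L^{2/\al})$ component is extracted from the retarded Strichartz inequality for the damped wave equation on $\T^2$, using the wave-admissible pair $(q,r)=(\infty,2/\al)$ at scaling level $s=1-\al$ (which sits right at the 2D admissibility condition $\tfrac1q\le\tfrac12(\tfrac12-\tfrac1r)$). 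Together these yield
\begin{align*}
\|\Gamma(w)\|_{X^{1-\al}(T)} \les \|\Im\{e^{i\be w}\U\}\|_{L^1([0,T];H^{-\al})}.
\end{align*}

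For (ii), I would combine the product estimate
\begin{align*}
\|fg\|_{H^{-\al}} \les \|f\|_{H^\al}\,\|g\|_{W^{-\al,\infty}} \qquad(0<\al<1)
\end{align*}
(compare Lemma~\ref{LEM:toolbox}\,(iv)) with the fractional chain rule $\||\nabla|^\al e^{i\be w}\|_{L^2}\les|\be|\|w\|_{\dot H^\al}$, to obtain
\begin{align*}
\|e^{i\be w}\U\|_{H^{-\al}} \les \bigl(1+\|w\|_{H^\al}\bigr)\|\U\|_{W^{-\al,\infty}} \les \bigl(1+\|w\|_{H^{1-\al}}\bigr)\|\U\|_{W^{-\al,\infty}},
\end{align*}
where the last step uses the standing assumption $\al<\tfrac12$ (equivalently $\al<1-\al$). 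Cauchy--Schwarz in time then yields
\begin{align*}
\|\Gamma(w)\|_{X^{1-\al}(T)} \les T^{1/2}\bigl(1+\|w\|_{X^{1-\al}(T)}\bigr)\|\U\|_{L^2([0,T];W^{-\al,\infty})}.
\end{align*}
The matching difference estimate is obtained from the representation $e^{i\be w_1}-e^{i\be w_2}=i\be(w_1-w_2)\int_0^1 e^{i\be(\tau w_1+(1-\tau)w_2)}\,d\tau$ combined with the same product + chain-rule pair applied to this integrand, bounding $\|e^{i\be w_1}-e^{i\be w_2}\|_{H^\al}$ in terms of $\|w_1-w_2\|_{H^{1-\al}}$ and $\|w_j\|_{H^{1-\al}}$. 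Choosing $T$ small enough (depending on $\|\U\|_{L^2([0,1];W^{-\al,\infty})}$) makes $\Gamma$ a strict contraction on $B_R$; uniqueness in $X^{1-\al}(T)$ and continuity of the solution map $\U\mapsto w$ then follow from the very same bounds.

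The main technical obstacle is the $L^\infty([0,T];L^{2/\al})$ part of $X^{1-\al}(T)$: on $\T^2$ the Sobolev embedding $H^{1-\al}\hookrightarrow L^{2/\al}$ is the critical one and fails at this endpoint, so the bound cannot be obtained from the energy estimate alone but must come from a genuinely dispersive input, i.e., from the endpoint-type Strichartz estimate above (which sits exactly at the wave-admissibility boundary). This, together with the fact that matching the product estimate against $\U\in W^{-\al,\infty}$ leaves only one degree of smoothing to absorb $\al$ on both sides, forces the restriction $\al<\tfrac12$ and hence the threshold $\be^2<2\pi$ in Theorem~\ref{THM:GWP}.
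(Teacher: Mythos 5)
Your plan coincides with the paper's proof at essentially every step: both run a contraction argument on the Duhamel formulation, both invoke the Strichartz estimate from Lemma~\ref{LEM:toolbox}\,(i) to land in $X^{1-\al}(T)$, and both bound the source $e^{i\be w}\U$ in $L^1_T H^{-\al}_x$ by pairing $\U$ against $e^{i\be w}$ via the product estimate of Lemma~\ref{LEM:toolbox}\,(iv), controlling $e^{i\be w}$ in $H^\al_x$ by the fractional chain rule of Lemma~\ref{LEM:toolbox}\,(ii) (using $\al<1-\al$), and extracting the $T^{1/2}$ factor by Cauchy--Schwarz in time. Your remark that the $L^\infty_T L^{2/\al}_x$ component genuinely requires a dispersive input, because the Sobolev embedding $H^{1-\al}\hookrightarrow L^{2/\al}$ is critical, is accurate and matches the role the retarded Strichartz estimate plays in the paper.

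There is, however, a gap in the difference estimate as you state it, and it matters precisely in the regime $\tfrac13<\al<\tfrac12$. You propose to bound $\|e^{i\be w_1}-e^{i\be w_2}\|_{H^\al}$ directly via the identity $e^{i\be w_1}-e^{i\be w_2}=(w_1-w_2)F(w_1,w_2)$ and the fractional Leibniz rule (Lemma~\ref{LEM:toolbox}\,(iii)) with target space $L^2$. Chasing the exponents there, the second Leibniz term $\|w_1-w_2\|_{L^{p_2}}\,\||\nabla|^\al F\|_{L^{q_2}}$ with $\tfrac1{p_2}+\tfrac1{q_2}=\tfrac12$ forces $q_2\le\tfrac1\al$ (so that the chain rule and $H^{1-\al}\hookrightarrow W^{\al,q_2}$ control $\||\nabla|^\al F\|_{L^{q_2}}$) and simultaneously $p_2\le\tfrac2\al$ (so that the $L^\infty_T L^{2/\al}_x$ piece of $X^{1-\al}(T)$ controls $\|w_1-w_2\|_{L^{p_2}}$); these two constraints are compatible only for $\al\le\tfrac13$. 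The paper avoids this by not aiming for $H^\al$: in the application of Lemma~\ref{LEM:toolbox}\,(iv) one takes $r=2$ but tilts the Lebesgue indices, putting $\U$ in $W^{-\al,2/\eps}_x$ (harmless, since $\U\in W^{-\al,\infty}_x$ and $\T^2$ is compact) and the product $(w_1-w_2)F$ in $W^{\al,2/(1+\al-\eps)}_x$ for small $\eps>0$. With this strictly weaker, sub-$L^2$ target, the fractional Leibniz rule lands on $\|w_1-w_2\|_{H^\al}\|F\|_{L^{2/(\al-\eps)}}+\|w_1-w_2\|_{L^{2/\al}}\|F\|_{W^{\al,2/(1-\eps)}}$, and the requisite embedding $H^{1-\al}\hookrightarrow W^{\al,2/(1-\eps)}$ holds for every $\al<\tfrac12$ once $\eps\le 1-2\al$. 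So the overall strategy and the bound for $\Phi$ on the ball are right, but your Lipschitz estimate needs this $\eps$-adjustment to cover the whole range $\al<\tfrac12$, and hence the whole range $\be^2<2\pi$.
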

\noi

Once we prove Proposition \ref{PROP:LWP}, 
the convergence of the solution $u_N = w_N + \Psi$
to \eqref{RSdSGN}
follows from Lemma \ref{LEM:Ups}
and 
arguing as in our previous work \cite{ORSW}.
Note that the restriction $0<\al<\frac12$ in Proposition~\ref{PROP:LWP} 
gives the range $0<\be^2<2\pi$ in Theorem \ref{THM:GWP} in view of Lemma~\ref{LEM:Ups}.

Before proceeding to the proof of 
Proposition \ref{PROP:LWP}, we state 
the following  deterministic tools from \cite{ORSW}. 
\begin{lemma}\label{LEM:toolbox}
Let $0 < \al < 1$ and $ d \geq 1$. Then, the following estimates hold:

\smallskip
\noi
\textup{(i) (Strichartz estimate).} 
Let $u$ be a solution to  the linear damped wave equation on $\R_+ \times \T^2$:
\begin{align*}
\begin{cases}
\dt^2 u +\dt u + (1-\Dl) u  = f\\
(u,\dt u)|_{t=0}=(u_0,u_1).
\end{cases}
\end{align*}

\noi
Then, for any $0 < T \leq 1$, we have
\begin{align*}
\|u\|_{C_TH^{1-\al}_x} + \|\dt u\|_{C_TH^{-\al}_x} + \|u\|_{L^{\infty}_TL^{\frac2\al}_x} \les \|(u_0,u_1)\|_{\H^{1-\al}} + \|f\|_{L^1_TH^{-\al}_x}.
\end{align*}

\smallskip
\noi
\textup{(ii) (fractional chain rule).} Let $F$ be a Lipschitz function on $\R$ such that $\|F'\|_{L^{\infty}(\R)}\le L$.
Then, for any    $1<p<\infty$, we have 
\begin{align*}
\big\||\nabla|^\al F(f)\big\|_{L^p(\T^d)}\les L\big\||\nabla|^\al f\big\|_{L^p(\T^d)}.
\end{align*}

\smallskip
\noi
\textup{(iii) (fractional Leibniz rule).} Let $1<p_j,q_j,r<\infty$ with $\frac1{p_j}+\frac1{q_j}=\frac1r$, $j=1,2$. Then, we have 
\begin{align*}
\big\|\jb{\nabla}^\al(fg)\big\|_{L^r(\T^d)} 
\les \big\|\jb{\nabla}^\al f\big\|_{L^{p_1}(\T^d)}\|g\|_{L^{q_1}(\T^d)} + \|f\|_{L^{p_2}(\T^d)}\big\|\jb{\nabla}^\al g\big\|_{L^{q_2}(\T^d)}.
\end{align*}

\smallskip
\noi
\textup{(iv) (product estimate).} 
Let $1<p,q,r<\infty$ such that $\frac1p+\frac1q\le \frac1r + \frac{\al}d$. Then, we have
\begin{align*}
\big\|\jb{\nabla}^{-\al}(fg)\big\|_{L^r(\T^d)} \les \big\|\jb{\nabla}^{-\al}f\big\|_{L^p(\T^d)}
\big\|\jb{\nabla}^\al g\big\|_{L^r(\T^d)}.
\end{align*}
\end{lemma}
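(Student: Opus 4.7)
The plan is to reformulate \eqref{Z0} via Duhamel's principle as the fixed point problem $w=\Phi(w)$ with
\[
\Phi(w)(t)=-\int_0^t \D(t-t')\,\Im\bigl\{e^{i\be w(t')}\U(t')\bigr\}\,dt',
\]
and to run a standard Banach contraction argument on a closed ball of radius $O(1)$ in $X^{1-\al}(T)$. The linear Strichartz estimate from Lemma~\ref{LEM:toolbox}\,(i), applied to $\Phi(w)$ with vanishing initial data, controls the full $X^{1-\al}(T)$-norm (including the $L^\infty_T L^{2/\al}_x$ piece) by $\|e^{i\be w}\U\|_{L^1([0,T];H^{-\al}(\T^2))}$, so the whole problem is reduced to a single nonlinear estimate of $e^{i\be w}\U$ in $H^{-\al}(\T^2)$.

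For the nonlinear estimate, I would apply the product estimate Lemma~\ref{LEM:toolbox}\,(iv) with $d=2$, $r=2$, and $p$ large finite so that $\tfrac{1}{q}$ lies slightly above $\tfrac{1}{2}+\tfrac{\al}{2}$, together with the embedding $W^{-\al,\infty}(\T^2)\hookrightarrow W^{-\al,p}(\T^2)$ on the compact torus, to obtain
\[
\|e^{i\be w}\U\|_{H^{-\al}}\les \|\U\|_{W^{-\al,\infty}}\,\bigl\|\jb{\nb}^{\al}e^{i\be w}\bigr\|_{L^q},\qquad q \text{ slightly above } \tfrac{2}{1+\al}.
\]
The fractional chain rule Lemma~\ref{LEM:toolbox}\,(ii), applied to the Lipschitz map $x\mapsto e^{i\be x}$, then bounds $\||\nb|^\al e^{i\be w}\|_{L^q}\les \be\,\||\nb|^\al w\|_{L^q}$. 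Since $q<2$, the Sobolev embedding $H^{1-\al}(\T^2)\hookrightarrow W^{\al,q}(\T^2)$ holds provided $1-\al\ge\al$, i.e.\ $\al\le\tfrac12$, yielding $\|e^{i\be w}\U\|_{H^{-\al}}\les \|\U\|_{W^{-\al,\infty}}(1+\|w\|_{H^{1-\al}})$. Cauchy--Schwarz in time then gives
\[
\|e^{i\be w}\U\|_{L^1([0,T];H^{-\al})}\les T^{1/2}\|\U\|_{L^2([0,T];W^{-\al,\infty})}\bigl(1+\|w\|_{X^{1-\al}(T)}\bigr),
\]
so $\Phi$ maps a fixed ball of $X^{1-\al}(T)$ into itself once $T$ is chosen small depending on $\|\U\|_{L^2([0,1];W^{-\al,\infty})}$.

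For the contraction and for continuous dependence on $\U$, I would use the identity $e^{i\be w_1}-e^{i\be w_2}=i\be(w_1-w_2)\int_0^1 e^{i\be(s w_1+(1-s)w_2)}\,ds$, split the $W^{\al,q}$-derivative between the two factors via the fractional Leibniz rule Lemma~\ref{LEM:toolbox}\,(iii), and estimate each piece using Lemma~\ref{LEM:toolbox}\,(ii) together with the same Sobolev embedding as above. This produces
\[
\|\Phi(w_1)-\Phi(w_2)\|_{X^{1-\al}(T)}\les T^{1/2}\|\U\|_{L^2_T W^{-\al,\infty}_x}\|w_1-w_2\|_{X^{1-\al}(T)},
\]
giving the contraction upon further shrinking $T$ if needed. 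The same scheme applied to the difference of two solutions driven by $\U_1,\U_2$ delivers Lipschitz continuous dependence of the map $\U\mapsto w$.

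The main obstacle is closing this nonlinear estimate. Because $H^{1-\al}(\T^2)$ fails to embed into $L^\infty$ in two dimensions for $\al>0$, the oscillating factor $e^{i\be w}$ cannot simply be placed in $L^\infty$ using $\|w\|_{H^{1-\al}}$, and one has to balance the loss in Lebesgue integrability coming from the product estimate against the gain supplied by the fractional chain rule and Sobolev embedding. The trade-off is sharp: the embedding $H^{1-\al}(\T^2)\hookrightarrow W^{\al,2/(1+\al)}(\T^2)$ on the two-dimensional torus requires exactly $1-\al\ge\al$, and this is precisely the origin of the threshold $\al<\tfrac12$ appearing in the statement.
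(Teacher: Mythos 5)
Your proposal does not prove the statement in question. The statement is Lemma \ref{LEM:toolbox} itself, i.e.\ the four deterministic estimates: the Strichartz estimate for the damped Klein--Gordon flow, the fractional chain rule, the fractional Leibniz rule, and the product estimate. What you have written is instead a proof of Proposition \ref{PROP:LWP}, the local well-posedness of the model equation \eqref{Z0}: you take all four parts of Lemma \ref{LEM:toolbox} as given and use them to close a contraction argument in $X^{1-\al}(T)$. As a proof of the lemma this is circular; as a proof of the proposition it is in fact essentially the paper's own argument (Duhamel, part (i) to reduce everything to an $L^1_T H^{-\al}_x$ bound on $e^{i\be w}\U$, part (iv) plus the chain rule for the self-mapping bound, the fundamental theorem of calculus plus parts (iii) and (ii) for the difference estimate, with the threshold $\al<\tfrac12$ arising from the requirement $\al<1-\al$), but that is not the statement you were asked to establish.

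A proof of Lemma \ref{LEM:toolbox} would have to supply the following, none of which appears in your write-up. For (i): the Strichartz estimates for the Klein--Gordon equation $\dt^2 u + (\tfrac34-\Dl)u = f$ on $\R^2$ for the $(1-\al)$-wave admissible pair $(\infty,\tfrac2\al)$ (as in \cite{GV,KeelTao,KSV}), transferred to $\T^2$ on the unit time scale via finite speed of propagation, together with the observation that the damped propagator $\D(t)$ in \eqref{D} is $e^{-t/2}$ times the Klein--Gordon propagator with mass $\tfrac34$, so that the exponential prefactor is harmless for $0<T\le1$; the $C_T H^{1-\al}_x$ and $C_T H^{-\al}_x$ bounds then follow from the explicit Fourier multiplier form of $\D(t)$ and $\dt\D(t)$. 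For (ii): the fractional chain rule on $\T^d$, which is a nontrivial harmonic-analysis fact (the paper cites \cite{Gatto}). For (iii) and (iv): the Coifman--Meyer-type fractional Leibniz rule and the duality/Sobolev argument underlying the product estimate, for which the paper refers to \cite{GKO}. If your intention was to prove Proposition \ref{PROP:LWP}, your argument is sound and matches the paper's; but as a proof of Lemma \ref{LEM:toolbox} it is a complete mismatch.
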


The Strichartz estimate on $\T^2$ in (i) follows from 
the corresponding Strichartz estimate for the wave/Klein-Gordon equation
on $\R^2$ 
(see \cite{GV,KeelTao,KSV})
for the 
 $(1- \al)$-wave admissible pair  $(\infty,\frac2\al)$, 
the finite speed of propagation, 
and the fact that the linear damped wave propagator $\D(t)$
in \eqref{D}
satisfies the same Strichartz estimates
as that for 
 the Klein-Gordon equation $\dt^2u +(\frac34-\Dl)u =0$. 
For the fractional chain rule on $\T^d$, \cite{Gatto}.
See \cite{GKO} for (iii) and (iv).

We now present a proof of Proposition \ref{PROP:LWP}.

\begin{proof}[Proof of Proposition \ref{PROP:LWP}.]
By writing \eqref{Z0} in the Duhamel formulation, we have
\begin{align*}
w(t) = \Phi(w)(t): =  -\int_0^t\D(t-t')\Im\big\{e^{i\be w}\U\big\}(t')dt',
\end{align*}

\noi
where $\D(t)$ is as in \eqref{D}.
Fix  $0 < T \leq 1$
and $0 < \al < \frac 12$.
We use $B$ to denote the ball in $X^{1-\al}(T)$ of radius $1$
centered at the origin.

From  Lemma \ref{LEM:toolbox} (i), (iv), and then (ii)
with $\al < 1 - \al$, 
we have
\begin{align}
\begin{split}
\| \Phi(w) \|_ {X^{1-\al}(T)} 
&\les 
\|  e^{i\be w}\U \|_{L^1_TH^{-\al}_x}
\les  T^{\frac12}\|e^{i\be w}\|_{L^{\infty}_TH^{\al}_x}\|\U\|_{L^2_TW^{-\al,\frac2\al}_x}\\
 &\les T^{\frac12} \big( 1 + \|w \|_{X^{1-\al}(T)}\big)\|\U\|_{L^2_TW^{-\al,\infty}_x}\\
  &\les T^{\frac12} \|\U\|_{L^2_TW^{-\al,\infty}_x}
\end{split}
\label{Z1}
\end{align}

\noi
for $w \in B$.
By the fundamental theorem of calculus, we have
\begin{align*}
e^{i\be w_1}-e^{i\be w_2} = (w_1-w_2)F(w_1,w_2)\deff (w_1-w_2)(i\be)\int_0^1e^{i\be(\tau w_1 + (1-\tau)w_2)}d\tau.
\end{align*}

\noi
Thus, 
from  Lemma \ref{LEM:toolbox} (i) and (iv), we have
\begin{align}
\| \Phi(w_1)-\Phi(w_2) \|_ {X^{1-\al}(T)} 
&\les  T^{\frac12}\|(w_1-w_2)F(w_1,w_2)\|_{L^{\infty}_TW^{\al,\frac2{1+\al-\eps}}_x}\|\U\|_{L^2_TW^{-\al,\frac2\eps}_x}
\label{Z2}
\end{align}

\noi
for any small $\eps >0$.
Then, by applying  Lemma \ref{LEM:toolbox} (iii) and then (ii) to \eqref{Z2}, 
we obtain
\begin{align}
\begin{split}
\|(  w_1 &  -w_2)F(w_1,w_2)\|_{L^{\infty}_TW^{\al,\frac2{1+\al-\eps}}_x}\\
&\les \|w_1-w_2\|_{L^{\infty}_TH^{\al}_x}\|F(w_1,w_2)\|_{L^{\infty}_TL^{\frac2{\al-\eps}}_x} \\
& \hphantom{X}
+\|w_1-w_2\|_{L^{\infty}_TL^{\frac2\al}_x}\|F(w_1,w_2)\|_{L^{\infty}_TW^{\al,\frac2{1-\eps}}_x}\\
&\les \|w_1-w_2\|_{X^{1-\al}(T)}\Big(1+\|w_1\|_{L^{\infty}_TW^{\al,\frac2{1-\eps}}_x}+\|w_2\|_{L^{\infty}_TW^{\al,\frac2{1-\eps}}_x}\Big).
\end{split}
\label{Z3}
\end{align}

\noi
Given $0 < \al < \frac 12$, choose $\eps > 0$ small such that 
$\al + \eps < 1 - \al$.
Then, it follows from 
\eqref{Z2},  \eqref{Z3}, and Sobolev's inequality that 
\begin{align}
\| \Phi(w_1)-\Phi(w_2) \|_ {X^{1-\al}(T)} 
&\les T^{\frac12}
\|\U\|_{L^2_TW^{-\al,\infty}_x}
\|w_1-w_2\|_{X^{1-\al}(T)}
\label{Z4}
\end{align}

\noi
for any  $w_1,w_2\in B$.

Hence, we conclude from 
 \eqref{Z1} and \eqref{Z4}
 that the map $\Phi = \Phi_{\U}$ is a contraction on $B\subset X^{1-\al}(T)$,
  provided that $T=T\big(\|\U\|_{L^2([0, 1]; W^{-\al,\infty}_x}\big)>0$ is sufficiently small.
  The uniqueness in the whole space $X^{1-\al}(T)$ follows from a standard continuity argument, 
  while a small modification of
  the argument above shows  the continuous dependence on  $\U$.
\end{proof}

Proposition \ref{PROP:LWP} thus establishes 
local well-posedness of the truncated equation~\eqref{wN}, 
uniformly in $N\in\N$,  
and also for the limiting equation
\begin{align}\label{w}
\begin{cases}
\dt^2w + (1-\Dl)w + \dt w + \Im\big\{e^{i\be w}\U\big\} = 0,\\
(w,\dt w)|_{t=0}=(0,0), 
\end{cases}
\end{align}

\noi
where $\U$ is the limit of $\U_N$ constructed in Lemma \ref{LEM:Ups}.
We briefly describe an extra ingredient in 
showing 
 convergence of $w_N$  to  $w$, satisfying  \eqref{w}. 
 Since the flow map constructed in Proposition \ref{PROP:LWP} is continuous in $\U$, 
 there is only one extra term 
 $(\Id - \P_N)\big\{e^{i\be w}\U\big\}$
 in estimating  the difference $\|w_N-w\|_{C_TH^{1-\al}_x}$.
By exploiting the fact that this extra term is supported on 
high frequencies  $\{|n|\ges N\}$, we have 
\begin{align*}
\big\|(\Id-\P_N)\big\{e^{i\be w}\U\big\}\big\|_{L^1_TH^{-\al}_x} 
&\les N^{-\eps}\big\|e^{i\be w}\U\big\|_{L^1_TH^{-\al+\eps}_x}\\
&\les T^{\frac12}N^{-\eps}\big(1+\|w\|_{L^{\infty}_TH^{\al-\eps}_x}\big)\|\U\|_{L^2_TW^{-\al+\eps,\infty}_x}.
\end{align*}

\noi
Combining with the argument above, 
we can then prove  convergence  $w_N\to w$ as $N\to\infty$. 
Note that given $0<\be^2<2\pi$ and $0<\al<\frac12$ with $\frac{\be^2}{4\pi}<\al$, 
we have  $\frac{\be^2}{4\pi}<\al-\eps$ for small $\eps > 0$, 
which guarantees that  $\U\in L^2([0,T];W^{\eps-\al}(\T^2))$ in view of Lemma \ref{LEM:Ups}.

\begin{acknowledgment}

\rm 
T.O.~and T.R.~were supported by the European Research Council (grant no.~637995 ``ProbDynDispEq'').
P.S.~was partially supported by NSF grant DMS-1811093.

\end{acknowledgment}

\end{document}